\newtheorem{theorem}{Theorem}
\newtheorem{definition}[theorem]{Definition}
\newtheorem{lemma}[theorem]{Lemma}
\newtheorem{proposition}[theorem]{Proposition}
\begin{document}
\title[Bernstein Operators for Extended Chebyshev Systems]{Bernstein Operators for Extended Chebyshev Systems}
\author[J.M. Aldaz, O. Kounchev and H. Render]{J.M. Aldaz, O. Kounchev and H. Render}
\thanks{The first and the last author are partially supported by Grant
MTM2006-13000-C03-03 of the D.G.I. of Spain. The last two authors
acknowledge support within the project ``Institutes Partnership'' with the
Alexander von Humboldt Foundation, Bonn.}
\address{J. M. Aldaz: PERMANENT ADDRESS: Departamento de Matem\'aticas y Computaci\'on,
Universidad  de La Rioja, 26004 Logro\~no, La Rioja, Spain.}
\email{jesus.munarrizaldaz@dmc.unirioja.es}
\address{CURRENT ADDRESS: Departamento de Matem\'aticas,
Universidad  Aut\'onoma de Madrid, Cantoblanco 28049, Madrid, Spain.}
\email{jesus.munarriz@uam.es}
\address{O. Kounchev: Institute of Mathematics and Informatics, Bulgarian Academy of
Sciences, 8 Acad. G. Bonchev Str., 1113 Sofia, Bulgaria.}
\email{kounchev@gmx.de}
\address{H. Render: Departamento de Matem\'{a}ticas y Computaci\'{o}n, Universidad de
La Rioja, Edificio Vives, Luis de Ulloa s/n., 26004 Logro\~{n}o, Espa\~{n}a.}
\email{render@gmx.de}

\begin{abstract}
Let $U_{n}\subset C^{n}\left[ a,b\right] $ be an extended Chebyshev space of
dimension $n+1$. Suppose that $f_{0}\in U_{n}$ is strictly positive and $%
f_{1}\in U_{n}$ has the property that $f_{1}/f_{0}$ is strictly increasing.
We search for conditions ensuring the existence of points $%
t_{0},...,t_{n}\in \left[ a,b\right] $ and positive coefficients $\alpha
_{0},...,\alpha _{n}$ such that for all $f\in C\left[ a,b\right]$, the
operator $B_{n}:C\left[ a,b\right] \rightarrow U_{n}$ defined by $%
B_{n}f=\sum_{k=0}^{n}f\left( t_{k}\right) \alpha _{k}p_{n,k}$ satisfies $%
B_{n}f_{0}=f_{0}$ and $B_{n}f_{1}=f_{1}.$ Here it is assumed that $%
p_{n,k},k=0,...,n$, is a Bernstein basis, defined by the property that each $%
p_{n,k}$ has a zero of order $k$ at $a$ and a zero of order $n-k$ at $b.$

2000 Mathematics Subject Classification: \emph{Primary: 41A35, Secondary
41A50}

Key words and phrases: \emph{Bernstein polynomial, Bernstein operator,
extended Chebyshev system, exponential polynomial}
\end{abstract}

\maketitle

\section{Introduction}

A recent development in CAGD is the analysis of properties of Bernstein
bases defined by trigonometric or hyperbolic polynomials, or more generally,
by elements in a Chebyshev space, see \cite{CMP04}, \cite{CMP07}, \cite
{Cost00}, \cite{CLM}, \cite{MPS97}, \cite{Mazu99}, \cite{Mazu05}, \cite
{Mazu05b}, \cite{MaPo96}, \cite{Pena02}, \cite{Pena05}, \cite{Zhan96}. The definition of a
Bernstein basis is easily stated: Let $K$ be either the field of real or
complex numbers, denoted by $\mathbb{R}$ and $\mathbb{C}$ respectively.
Assume that $U_{n}$ is a $K$-linear subspace of dimension $n+1$ of $%
C^{n}\left( I,K\right) $, the space of $n$-times continuously differentiable
$K$-valued functions on an interval $I=\left[ a,b\right] $. A system $%
p_{n,k},k=0,...,n$, in $U_{n}$ is called a \emph{Bernstein basis} for $%
\{a,b\}$ if the function $p_{n,k}$ has a zero of order $k$ at $a$ and a zero
of order $n-k$ at $b$, for $k=0,...,n$. Unlike the case of the classical
Bernstein polynomial basis on $[a,b]$, the preceding definition does not
exclude the possibility that $p_{n,k}$ might have additional zeros inside
the open interval $\left( a,b\right) $. It is easy to see that a Bernstein
basis is indeed a \emph{basis} of the linear space $U_{n}$ and that the
basis functions are unique up to a non-zero factor, see e.g. the proof of
Lemma 19 and Proposition 20 in \cite{Veli07}. The existence of a Bernstein
basis is related to the concept of an extended Chebyshev system: A $K$%
-linear subspace $U_{n}\subset C^{n}(I,K)$ of dimension $n+1$ is an \emph{%
extended Chebyshev system (or space) for the subset }$A\subset I$ if each
non-zero $f\in U_{n}$ vanishes at most $n$ times in $A$, counting
multiplicities. No condition is imposed regarding the zeros of $f$ in $%
I\setminus A$. It is not difficult to prove that a Bernstein basis exists
for $U_{n}\subset C^{n}(I,K)$ if and only if $U_{n}$ is an extended
Chebyshev system for the \emph{set} $\left\{ a,b\right\} ,$ see e.g. \cite
{CMP04}, \cite{Mazu05} for the case $K=\mathbb{R}$. In the complex case $K=%
\mathbb{C,}$ Corollary 21 in \cite{Veli07} gives the necessity; a simple
argument from linear algebra gives the existence of a function $f_{k}\in
U_{n},$ $f_{k}\neq 0,$ having at least $k$ zeros at $a$ and at least $n-k$
zeros $b,$ and using that $U_{n}$ is a Chebyshev system for $\left\{
a,b\right\} $ one concludes that $f_{k},k=0,...,n$, is indeed a Bernstein
basis for $U_{n}.$

Now assume that $U_{n}\subset C^{n}\left[ a,b\right] $ is an extended
Chebyshev system for $\{a,b\}$. The aim of this paper is to present
sufficient conditions for the existence of a Bernstein operator $B_{n}:C%
\left[ a,b\right] \rightarrow U_{n}$ generalizing the classical Bernstein
operator
\begin{equation*}
\left( B_{n}f\right) \left( x\right) =\sum_{k=0}^{n}f\left( \frac{k}{n}%
\right) \binom{n}{k}x^{k}\left( 1-x\right) ^{n-k},
\end{equation*}
in the case $U_{n}$ is the set of all polynomials of degree $\leq n$ on $%
\left[ 0,1\right] $. Here $B_{n}1=1$ and $B_{n}x=x$. For a general $U_{n}$,
we assume that a Bernstein basis $p_{n,k},$ $k=0,...,n$, exists, and we
consider linear operators $B_{n}:C\left[ a,b\right] \rightarrow U_{n}$ of
the form
\begin{equation}
B_{n}f=\sum_{k=0}^{n}f\left( t_{k}\right) \alpha _{k}p_{n,k}  \label{defB}
\end{equation}
with points $t_{0},...,t_{n}\in \left[ a,b\right] $ and positive numbers $%
\alpha _{0},...,\alpha _{n}.$ We determine the points $t_{0},...,t_{n}$ and
the coefficients $\alpha _{0},...,\alpha _{n}$ from the property that $B_{n}$
fixes two given functions $f_{0},f_{1}\in U_{n},$ i.e. from
\begin{equation}
B_{n}f_{0}=f_{0}\text{ and }B_{n}f_{1}=f_{1}.  \label{eqfix}
\end{equation}
Throughout the paper we shall assume that $f_{0}$ is strictly positive
(i.e., $f_{0}$ is real-valued and  $f_{0}  >0$)
on $%
\left[ a,b\right]$  and that
$f_{1}/f_{0}$
is strictly increasing on $[a,b]$. Condition (\ref{eqfix}) leads to
equations for the nodes $t_{0},...,t_{n}$ and the coefficients $\alpha
_{0},...,\alpha _{n}$, cf. (\ref{eqakneu}) and (\ref{eqtk}), which have at
most one solution. The main difficulty is to guarantee the solvability of
these equations, something that leads to additional assumptions.

In order to guarantee that the Bernstein operator defined in (\ref{defB}) is
positive (i.e., for all non-negative $f\in C\left[ a,b\right] $, $B_{n}f\geq
0$) we shall assume that the Bernstein basis is \emph{non-negative}, that
is, $p_{n,k}\left( x\right) \geq 0$ for all $x\in \left[ a,b\right] $ and $%
k=0,...,n.$ Proposition 4 in \cite{AKR07} shows that an extended Chebyshev
system $U_{n}$ for $\left\{ a,b\right\} $ possesses a Bernstein basis of
real-valued functions if and only if $U_{n}$ is closed under complex
conjugation, i.e. the complex conjugate function $\overline{f}$ is in $U_{n}$
for all $f\in U_{n}$. If $U_{n}$ is an
extended Chebyshev system over the \emph{interval} $\left[ a,b\right] $, closed under complex conjugation, then  the
Bernstein basis functions $p_{n,k}$ are real-valued and do not have zeros
in the open interval $\left( a,b\right) .$ Thus, under these assumptions,
 non-negative Bernstein basis exist.

In order to formulate our main result we need the following notation: For a
strictly positive function $f_{0}\in U_{n}\subset C^{n}\left[ a,b\right] $
we define the \emph{space of derivatives modulo }$f_{0}$ by
\begin{equation*}
D_{f_{0}}U_{n}:=\left\{ \frac{d}{dx}\left( \frac{f}{f_{0}}\right) :f\in
U_{n}\right\} .
\end{equation*}
Clearly $D_{f_{0}}U_{n}$ is a linear space of dimension $n.$ Next we state
our main result. It is an immediate consequence of Theorems \ref{ThmBern},
  and \ref{ThmBern2}.

\begin{theorem}
\label{ThmMain} Assume that $U_{n}$ possesses a non-negative Bernstein basis $%
p_{n,k},k=0,...,n$ for $\{a,b\}\subset I$, $a<b$. Let $f_{0}\in U_{n}$ be
strictly positive, suppose $f_{1}\in U_{n}$ has the property that $%
f_{1}/f_{0}$ is strictly increasing, and assume that $D_{f_{0}}U_{n}$
possesses a non-negative Bernstein basis $q_{n-1,k},k=0,...,n-1.$ If the
coefficients $w_{k},k=0,...,n-1,$ defined by
\begin{equation}
\frac{d}{dx}\frac{f_{1}}{f_{0}}=\sum_{k=0}^{n-1}w_{k}q_{n-1,k}  \label{condw}
\end{equation}
are non-negative, then there exist points $t_{0},...,t_{n}\in \left[ a,b%
\right] $ with $t_{0}=a$ and $t_{n}=b$ and positive coefficients $\alpha
_{0},...,\alpha _{n}$, such that the operator
\begin{equation}
B_{n}f=\sum_{k=0}^{n}f\left( t_{k}\right) \alpha _{k}p_{n,k}  \label{defBTh1}
\end{equation}
satisfies $B_{n}f_{0}=f_{0}$ and $B_{n}f_{1}=f_{1}$.
\end{theorem}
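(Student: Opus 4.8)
The plan is to convert the two fixed-point conditions into explicit formulas for the nodes $t_k$ and the weights $\alpha_k$, and then to use the hypothesis on $D_{f_0}U_n$ to verify that those formulas give an admissible configuration. I would expand $f_0=\sum_{k=0}^n c_k p_{n,k}$ and $f_1=\sum_{k=0}^n d_k p_{n,k}$ in the Bernstein basis and set $g:=f_1/f_0$ and $P_k:=p_{n,k}/f_0\ge 0$. Since the $p_{n,k}$ form a basis of $U_n$, the condition $B_nf_0=f_0$ amounts to $\alpha_k f_0(t_k)=c_k$ for all $k$, and $B_nf_1=f_1$ to $\alpha_k f_1(t_k)=d_k$ for all $k$; assuming $\alpha_k>0$ and using $f_0>0$, these two together are equivalent to $g(t_k)=d_k/c_k$ together with $\alpha_k=c_k/f_0(t_k)$. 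As $g$ is a homeomorphism of $[a,b]$ onto $[g(a),g(b)]$, the whole problem reduces to proving (i) $c_k>0$ for every $k$, and (ii) $d_k/c_k\in[g(a),g(b)]$ for every $k$, with $d_0/c_0=g(a)$ and $d_n/c_n=g(b)$; one then simply puts $t_k:=g^{-1}(d_k/c_k)$ and $\alpha_k:=c_k/f_0(t_k)$, which automatically yields $t_0=a$, $t_n=b$, positive weights, and $B_nf_0=f_0$, $B_nf_1=f_1$.

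The engine for (i) and (ii) is the transition matrix $(a_{kj})$ defined by $P_k'=\sum_{j=0}^{n-1}a_{kj}q_{n-1,j}$, which is legitimate because $P_k'\in D_{f_0}U_n$. First I would record the vanishing orders: since $U_n$ is an extended Chebyshev space for $\{a,b\}$, $p_{n,k}$ has a zero of order exactly $k$ at $a$ and exactly $n-k$ at $b$, hence so does $P_k$ (as $f_0>0$), hence $P_k'$ has a zero of order exactly $k-1$ at $a$ (for $k\ge 1$) and exactly $n-k-1$ at $b$ (for $k\le n-1$); likewise $q_{n-1,j}$ vanishes to order exactly $j$ at $a$ and exactly $n-1-j$ at $b$. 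Since $D_{f_0}U_n$ is also an extended Chebyshev space for $\{a,b\}$, expansions in the $q_{n-1,j}$ exhibit no cancellation at the endpoints, so matching vanishing orders forces $a_{kj}=0$ unless $j\in\{k-1,k\}$, i.e. $P_k'=a_{k,k-1}q_{n-1,k-1}+a_{k,k}q_{n-1,k}$ with the conventions $q_{n-1,-1}:=q_{n-1,n}:=0$, and $a_{k,k-1}\neq 0$ for $1\le k\le n$, $a_{k,k}\neq 0$ for $0\le k\le n-1$. Next I would pin down the signs using non-negativity of the two Bernstein bases: since $p_{n,k}\ge 0$ has a zero of order exactly $k$ at $a$, its leading coefficient at $a$ is positive, so near $a$ one has $P_k(x)=D(x-a)^k+o((x-a)^k)$ with $D>0$, and hence $P_k'(x)>0$ for $x$ slightly larger than $a$; near $a$ the term $a_{k,k-1}q_{n-1,k-1}$ dominates and $q_{n-1,k-1}\ge 0$ has a zero of order exactly $k-1$, so comparing leading terms gives $a_{k,k-1}>0$. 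The symmetric analysis at $b$ gives $a_{k,k}<0$: there $P_k(x)=C(b-x)^{n-k}+o((b-x)^{n-k})$ with $C>0$, so $P_k'(x)<0$ just to the left of $b$, while $a_{k,k}q_{n-1,k}$ dominates with $q_{n-1,k}\ge 0$.

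With the matrix in hand, the remaining bookkeeping is routine. Dividing $f_0=\sum c_k p_{n,k}$ by $f_0$ gives $\sum_{k=0}^n c_k P_k\equiv 1$, hence $\sum_{k=0}^n c_k P_k'\equiv 0$; reading off the coefficient of $q_{n-1,j}$ gives $c_{j+1}a_{j+1,j}+c_j a_{j,j}=0$ for $0\le j\le n-1$, i.e. $c_{j+1}=-(a_{j,j}/a_{j+1,j})c_j$ with $-(a_{j,j}/a_{j+1,j})>0$, and since $c_0=f_0(a)/p_{n,0}(a)>0$ one obtains $c_k>0$ for all $k$, which is (i). Analogously $f_1/f_0=\sum d_k P_k$ gives $g'=\sum d_k P_k'=\sum_j(d_{j+1}a_{j+1,j}+d_j a_{j,j})q_{n-1,j}$, and comparing with the hypothesis $g'=\sum_j w_j q_{n-1,j}$ in (\ref{condw}) gives $w_j=d_{j+1}a_{j+1,j}+d_j a_{j,j}$; writing $r_k:=d_k/c_k$, substituting $d_k=r_k c_k$ and using $c_j a_{j,j}=-c_{j+1}a_{j+1,j}$ collapses this to $w_j=(r_{j+1}-r_j)c_{j+1}a_{j+1,j}$, whence $r_{j+1}-r_j=w_j/(c_{j+1}a_{j+1,j})\ge 0$ because $w_j\ge 0$, $c_{j+1}>0$, $a_{j+1,j}>0$. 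Evaluating the expansions of $f_0$ and $f_1$ at $a$ and at $b$ gives $r_0=g(a)$ and $r_n=g(b)$, so $g(a)=r_0\le r_1\le\dots\le r_n=g(b)$, which is exactly (ii); with (i) and (ii) the construction of the first paragraph concludes the proof.

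The step I expect to be the main obstacle is the middle paragraph: proving that the change-of-basis matrix from $\{P_k'\}$ to the Bernstein basis $\{q_{n-1,j}\}$ of $D_{f_0}U_n$ is bidiagonal with strictly positive subdiagonal and strictly negative diagonal. The bidiagonality is a zero-counting argument resting on the exact vanishing orders of Bernstein bases of extended Chebyshev spaces together with the absence of cancellation at the endpoints; the sign determination is the genuinely delicate part, and it is precisely where non-negativity of \emph{both} the $p_{n,k}$ and the $q_{n-1,j}$ is used, through a comparison of dominant terms at $a$ and at $b$. The reduction in the first paragraph and the two recursions in the last are then purely formal.
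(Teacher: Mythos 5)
Your proposal is correct and follows essentially the same route as the paper: you rederive the paper's Theorem~\ref{ThmBern} (reduce to positivity and ordering of the Bernstein-coefficient ratios), its Proposition~\ref{PropABL} and Lemma~\ref{LemA} (the bidiagonal identity $\frac{d}{dx}\bigl(p_{n,k}/f_0\bigr)=c_k q_{n-1,k-1}+d_k q_{n-1,k}$ with $c_k>0$ and $d_k<0$, with the signs read off from the non-negativity of both Bernstein bases at the endpoints), and its Theorems~\ref{Thm7} and~\ref{ThmBern2} (two-term recursions for the $f_0$-coefficients and for the ratios). The only noteworthy divergence is cosmetic: your telescoping identity $w_j=(r_{j+1}-r_j)c_{j+1}a_{j+1,j}$ yields the full monotone chain $r_0\le r_1\le\cdots\le r_n$ in one pass, whereas the paper compares each ratio with $r_0$ and $r_n$ separately via the auxiliary functions $\psi_a$ and $\psi_b$; also, the phrase ``hence $P_k'(x)>0$ near $a$'' should be justified from the exact order-$(k-1)$ zero of $P_k'$ (which you do establish in the preceding sentence) rather than by formally differentiating the asymptotic expansion of $P_k$.
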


The paper is organized as follows: In the second section we shall give a
simple characterization of the existence of a Bernstein operator fixing two
functions, in terms of properties of the Bernstein basis coefficients in the
expansions of $f_{0}$ and $f_{1}$. Section 3 contains some basic results
leading to an alternative proof of the normalization property established in
\cite{CMP04} and \cite{Mazu05}. The main result in Section 4 is Theorem \ref
{ThmBern2}, from which Theorem \ref{ThmMain} follows. Section 5 is devoted
to the case where $U_{n}$ is the space of exponential polynomials; we
present improvements of some previous results from \cite{AKR07}. In the last
section we shall exhibit an example showing that the assumption $w_k \ge 0$
in (\ref{condw}) is necessary for Theorem \ref{ThmMain} to hold.

\section{On the existence of Bernstein operators}

Let us review some basic facts and notations. The $k$-th derivative of a
function $f$ is denoted by $f^{\left( k\right) }.$ A function $f\in
C^{n}\left( I,\mathbb{C}\right) $ has a \emph{zero of order }$k$ or\emph{\
of multiplicity} $k$ at a point $a\in I$ if $f\left( a\right) =...=f^{\left(
k-1\right) }\left( a\right) =0$ and $f^{\left( k\right) }\left( a\right)
\neq 0.$ We shall repeatedly use the fact that
\begin{equation}
k!\cdot \lim_{x\rightarrow a}\frac{f\left( x\right) }{\left( x-a\right) ^{k}}%
=f^{\left( k\right) }\left( a\right) .  \label{eqLim}
\end{equation}
for any function $f\in C^{\left( k\right) }(I)$ with $f\left( a\right)
=...=f^{\left( k-1\right) }\left( a\right) =0$. Of course, the same
formula holds for one side limits, which is the way we will use it.

If $p_{n,k},$ $k=0,...,n,$ is a Bernstein basis of $U_{n}$, then given any
two functions $f_{0},f_{1}\in U_{n}$ there exist coefficients $\beta
_{0},...,\beta _{n}$ and $\gamma _{0},...,\gamma _{n}$ such that
\begin{equation}
f_{0}\left( x\right) =\sum_{k=0}^{n}\beta _{k}p_{n,k}\left( x\right) \text{
and }f_{1}\left( x\right) =\sum_{k=0}^{n}\gamma _{k}p_{n,k}\left( x\right) .
\label{eqeq}
\end{equation}
Next we characterize the existence of a Bernstein operator fixing $f_{0},f_{1}\in U_{n}$, in terms of properties of the coefficients
$\beta _{0},...,\beta _{n}$ and $\gamma _{0},...,\gamma _{n}:$

\begin{theorem}
\label{ThmBern} Assume that a subspace
$U_{n}\subset C^{n}\left[ a,b\right] $
of dimension $n+1$ possesses a non-negative Bernstein basis $p_{n,k},$ $%
k=0,...,n$, for $\{a,b\}$. Suppose $f_{0},f_{1}\in U_{n}$ are such that $f_{0}>0$ and $%
f_{1}/f_{0}$ is strictly increasing on $\left[ a,b\right] $. Then there
exist (unique) points $t_{0},...,t_{n}\in \left[ a,b\right] $ and positive
coefficients $\alpha _{0},...,\alpha _{n}$ such that the operator
\begin{equation}
B_{n}f=\sum_{k=0}^{n}f\left( t_{k}\right) \alpha _{k}p_{n,k}  \label{defBTh2}
\end{equation}
fixes $f_{0}$ and $f_{1}$, if and only if the coefficients defined in (\ref
{eqeq}) satisfy $\beta _{k}>0$ and
\begin{equation}
\frac{f_{1}\left( a\right) }{f_{0}\left( a\right) }=\frac{\gamma _{0}}{\beta
_{0}}\leq \frac{\gamma _{k}}{\beta _{k}}\leq \frac{\gamma _{n}}{\beta _{n}}=%
\frac{f_{1}\left( b\right) }{f_{0}\left( b\right) }  \label{eqgamma}
\end{equation}
for all $k=0,...,n.$
\end{theorem}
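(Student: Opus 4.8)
The plan is to work directly with the expansions $f_{0}=\sum_{k}\beta_{k}p_{n,k}$ and $f_{1}=\sum_{k}\gamma_{k}p_{n,k}$ and to translate the requirement ``$B_{n}f_{0}=f_{0}$ and $B_{n}f_{1}=f_{1}$'' into equations on the unknowns $t_{0},\dots,t_{n}$ and $\alpha_{0},\dots,\alpha_{n}$. Since $B_{n}f_{0}=\sum_{k}f_{0}(t_{k})\alpha_{k}p_{n,k}$ and the Bernstein basis is linearly independent, the condition $B_{n}f_{0}=f_{0}$ is equivalent to $f_{0}(t_{k})\alpha_{k}=\beta_{k}$ for every $k$, and likewise $B_{n}f_{1}=f_{1}$ is equivalent to $f_{1}(t_{k})\alpha_{k}=\gamma_{k}$. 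Dividing the second set of equations by the first (legitimate once we know $f_{0}(t_{k})\alpha_{k}=\beta_{k}\neq 0$) gives
\begin{equation*}
\frac{f_{1}(t_{k})}{f_{0}(t_{k})}=\frac{\gamma_{k}}{\beta_{k}},\qquad \alpha_{k}=\frac{\beta_{k}}{f_{0}(t_{k})},\qquad k=0,\dots,n.
\end{equation*}
This already forces $\beta_{k}\neq 0$; and since $f_{0}>0$ and $\alpha_{k}>0$ is required, it forces $\beta_{k}>0$, and then $\alpha_{k}>0$ is automatic. So the content is: the system is solvable with the sign constraints if and only if $\beta_{k}>0$ for all $k$ and each value $\gamma_{k}/\beta_{k}$ lies in the range of the strictly increasing function $g:=f_{1}/f_{0}$ on $[a,b]$, i.e. in $[g(a),g(b)]$.

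Next I would nail down the endpoint values. Evaluating the identities $f_{0}(x)=\sum_{k}\beta_{k}p_{n,k}(x)$ and $f_{1}(x)=\sum_{k}\gamma_{k}p_{n,k}(x)$ at $x=a$: all $p_{n,k}$ with $k\geq 1$ vanish at $a$, so $f_{0}(a)=\beta_{0}p_{n,0}(a)$ and $f_{1}(a)=\gamma_{0}p_{n,0}(a)$, whence $g(a)=f_{1}(a)/f_{0}(a)=\gamma_{0}/\beta_{0}$; here $p_{n,0}(a)\neq 0$ because $p_{n,0}$ has a zero of order exactly $0$ at $a$. Symmetrically $g(b)=\gamma_{n}/\beta_{n}$. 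Thus the condition ``$\gamma_{k}/\beta_{k}\in[g(a),g(b)]$ for all $k$'' is exactly display (\ref{eqgamma}). This also shows $t_{0}=a$ and $t_{n}=b$ are forced (the only points where $g$ attains $g(a)$, resp. $g(b)$, by strict monotonicity).

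For the equivalence itself: if (\ref{eqgamma}) holds and all $\beta_{k}>0$, then since $g$ is continuous and strictly increasing on $[a,b]$ it maps $[a,b]$ bijectively onto $[g(a),g(b)]$, so for each $k$ there is a unique $t_{k}\in[a,b]$ with $g(t_{k})=\gamma_{k}/\beta_{k}$; setting $\alpha_{k}:=\beta_{k}/f_{0}(t_{k})>0$ yields an operator $B_{n}$ of the required form, and reversing the algebra above shows $B_{n}f_{0}=f_{0}$, $B_{n}f_{1}=f_{1}$. Uniqueness of $t_{k}$ and $\alpha_{k}$ follows from strict monotonicity of $g$ and from $\alpha_{k}=\beta_{k}/f_{0}(t_{k})$. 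Conversely, if such $t_{k},\alpha_{k}$ exist, the displayed equations hold, forcing $\beta_{k}>0$ and $\gamma_{k}/\beta_{k}=g(t_{k})\in[g(a),g(b)]$, i.e.\ (\ref{eqgamma}).

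The only genuinely delicate point is the first reduction step — that $B_{n}f_{0}=f_{0}$ forces $f_{0}(t_{k})\alpha_{k}=\beta_{k}$ \emph{coefficientwise}. This is just linear independence of the Bernstein basis (noted in the introduction), so there is no real obstacle; the rest is bookkeeping with the vanishing orders of $p_{n,0}$ and $p_{n,n}$ at the endpoints and the intermediate value theorem for the strictly increasing quotient $g=f_{1}/f_{0}$. I would present the forward and backward implications together, organized around the chain of equivalences displayed above.
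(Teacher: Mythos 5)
Your proof is correct and follows essentially the same route as the paper's: linear independence of the Bernstein basis turns the fixing conditions into the coefficientwise equations $f_0(t_k)\alpha_k=\beta_k$ and $f_1(t_k)\alpha_k=\gamma_k$, the endpoint evaluations pin down $\gamma_0/\beta_0$ and $\gamma_n/\beta_n$, and the strictly increasing continuous ratio $f_1/f_0$ supplies the $t_k$ via the intermediate value theorem. Nothing is missing; the small extra remark that $t_0=a$ and $t_n=b$ are forced is a correct (and welcome) observation that the paper leaves implicit here and records in Theorem~\ref{ThmMain}.
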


\begin{proof}
Suppose there exists a Bernstein operator $B_{n}$ with the properties
described by the theorem. From $B_{n}(f_{0}) = f_{0}$ and (\ref{eqeq}) we
get
\begin{equation*}
B_{n}\left( f_{0}\right) =\sum_{k=0}^{n}f_{0}\left( t_{k}\right) \alpha
_{k}p_{n,k}=\sum_{k=0}^{n}\beta _{k}p_{n,k}.
\end{equation*}
Since $p_{n,k}$ is a basis we conclude that
\begin{equation}
f_{0}\left( t_{k}\right) \alpha _{k}=\beta _{k}.  \label{eqakneu}
\end{equation}
A similar argument, using $B_{n}f_{1}=f_{1}$, yields $f_{1}\left(
t_{k}\right) \alpha _{k}=\gamma _{k}.$ Dividing by $f_{0}\left( t_{k}\right)
\alpha _{k}=\beta _{k}$, we find that $t_{k}$ satisfies
\begin{equation}
\frac{f_{1}\left( t_{k}\right) }{f_{0}\left( t_{k}\right) }=\frac{\gamma
_{k} }{\beta _{k}}.  \label{eqtk}
\end{equation}
From $f_{0} > 0$ and $\alpha _{k} > 0$ we get $\beta _{k}> 0$. Inserting $%
x=a $ in (\ref{eqeq}) we obtain $f_{0}\left( a\right) =\beta
_{0}p_{n,0}\left( a\right) $ and $f_{1}\left( a\right) =\gamma
_{0}p_{n,0}\left( a\right)$. Thus
\begin{equation*}
\frac{f_{1}\left( a\right) }{f_{0}\left( a\right) }=\frac{\gamma _{0}}{\beta
_{0}}.
\end{equation*}
Similarly, $f_{0}\left( b\right) =\beta _{n}p_{n,n}\left( b\right) $ and $%
f_{1}\left( b\right) =\gamma _{n}p_{n,n}\left( b\right) $ imply that $\frac{%
f_{1}\left( b\right) }{f_{0}\left( b\right) }=\frac{\gamma _{n}}{\beta _{n}}%
. $ By assumption, $f_{1}/f_{0}$ is increasing and $t_{k}\in \left[ a,b%
\right] $, so $\frac{\gamma _{0}}{\beta _{0}}\leq \frac{\gamma _{k}}{\beta
_{k}}\leq \frac{\gamma _{n}}{\beta _{n}}$ for all $k=0,...,n.$

Next, suppose that the coefficients $\beta _{0},...,\beta _{n}$ are positive
and (\ref{eqgamma}) is satisfied. Since $h:=f_{1}/f_{0}$ is strictly
increasing and continuous, the image of $h$ is just the interval $\left[
h\left( a\right) ,h\left( b\right) \right] .$ So for each $\gamma _{k}/\beta
_{k}$ there exists a unique point $t_{k}\in \left[ a,b\right] $ such that $%
h\left( t_{k}\right) =\gamma _{k}/\beta _{k}$, $k=0,...,n.$ We define $%
\alpha _{k} > 0$ by the equation $f_{0}\left( t_{k}\right) \alpha _{k}=\beta
_{k}$, using $f_{0} > 0$. Then
\begin{equation*}
B_{n}\left( f_{0}\right) =\sum_{k=0}^{n}f_{0}\left( t_{k}\right) \alpha
_{k}p_{n,k}=\sum_{k=0}^{n}\beta _{k}p_{n,k}=f_{0}.
\end{equation*}
Finally, it follows from the equations $h\left( t_{k}\right) =\gamma
_{k}/\beta _{k}$ and $f_{0}\left( t_{k}\right) \alpha _{k}=\beta _{k}$ that
\begin{equation*}
B_{n}\left( f_{1}\right) =\sum_{k=0}^{n}f_{1}\left( t_{k}\right) \alpha
_{k}p_{n,k}=\sum_{k=0}^{n}f_{0}\left( t_{k}\right) \frac{\gamma _{k}}{\beta
_{k}}\alpha _{k}p_{n,k}=\sum_{k=0}^{n}\gamma _{k}p_{n,k}=f_{1}.
\end{equation*}
\end{proof}

\section{The normalization property}

A remarkable recent result is the existence of a so-called \emph{normalized}
\emph{Bernstein basis: } Assume that $U_{n}\subset C^{n}\left( I,\mathbb{R}%
\right) $ is an extended Chebyshev system over $\left[ a,b\right] $
containing the constant function $1,$ so there exists a non-negative
Bernstein basis $p_{n,k},k=0,...,n$. In particular, for some coefficients $%
\alpha _{k},k=0,...,n,$ we have $1=\sum_{k=0}^{n}\alpha _{k}p_{n,k}.$ The
\emph{normalization property} says that the coefficients $\alpha _{k}$ are
\emph{positive,} (i.e.,, no cancellation is required in order to obtain the
constant function 1 from the basis). It is proved in \cite{CMP04} and \cite
{Mazu05} that the normalization property holds if and only if the space $%
U_{n}^{\prime }$ of all derivatives $f^{\prime }$ with $f\in U_{n}$ is an
extended Chebyshev space over $\left[ a,b\right] .$ As a byproduct of our
investigations we shall obtain an alternative proof of this fact, valid in
the more general context of subspaces of $C^{n}\left( I,K\right) $.

\begin{proposition}
\label{PropABL}Assume that $U_{n}$ has a Bernstein basis $p_{n,k},k=0,...,n.$
Let $f_{0}\in U_{n}$ be strictly positive and suppose that $D_{f_{0}}U_{n}$
has a Bernstein basis $q_{n-1,k}$, $k=0,...,n-1.$
Set $c_0 := 0$, $q_{n-1, - 1} := 0$, $d_n:= 0$, and $q_{n-1, n} := 0$. For $k=1,...,n$, define the
non-zero numbers
\begin{equation}
c_{k}:= \frac{1}{f_{0}\left( a\right) }\lim_{x\downarrow a}\frac{\frac{d}{dx}%
p_{n,k}\left( x\right) }{q_{n-1,k-1}\left(x\right)}
= \frac{1}{f_{0}\left( a\right) }\frac{p_{n,k}^{\left( k\right) }\left(
a\right) }{q_{n-1,k-1}^{\left( k-1\right) }\left( a\right) }  \label{eqPR1}
\end{equation}
and for $k=0,...,n-1$, the non-zero numbers
\begin{equation}
d_{k}:=\frac{1}{f_{0}\left( b\right) }\lim_{x\uparrow b}\frac{\frac{d}{dx}%
p_{n,k}\left( x\right) }{q_{n-1,k}\left(x\right)} = \frac{1}{%
f_{0}\left(b\right) }\frac{p_{n,k}^{\left( n-k\right) }\left( b\right) }{%
q_{n-1,k}^{\left( n-1-k\right) }\left( b\right) }.  \label{eqPR}
\end{equation}
Then for every $k=0,...,n$,
\begin{equation}
\frac{d}{dx}\frac{p_{n,k}\left( x\right) }{f_{0}\left( x\right) }
=c_{k}q_{n-1,k-1}\left( x\right) +d_{k}q_{n-1,k}\left( x\right).
\label{eqPREC}
\end{equation}
\end{proposition}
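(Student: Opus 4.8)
The plan is to notice that $g_{k}:=\frac{d}{dx}(p_{n,k}/f_{0})$ lies in $D_{f_{0}}U_{n}$ by the very definition of that space, hence expands uniquely in the Bernstein basis as $g_{k}=\sum_{j=0}^{n-1}a_{j}q_{n-1,j}$, and then to determine which coefficients $a_{j}$ may be non-zero by counting the orders of the zeros of $g_{k}$ at $a$ and at $b$. The fact that does the real work here is elementary and I would isolate it first: if $\phi=\sum_{j=0}^{m}b_{j}r_{j}$ is a non-zero element of a space with Bernstein basis $r_{0},\dots,r_{m}$ for $\{a,b\}$, then the order of the zero of $\phi$ at $a$ is $\min\{j:b_{j}\neq 0\}$ and the order of the zero of $\phi$ at $b$ is $m-\max\{j:b_{j}\neq 0\}$. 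This follows immediately from $r_{j}^{(i)}(a)=0$ for $i<j$ together with $r_{j}^{(j)}(a)\neq 0$ (and symmetrically at $b$): the lowest surviving derivative of $\phi$ at $a$ is the one of order $\min\{j:b_{j}\neq 0\}$, and there it is non-zero.

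Next I would determine the endpoint orders of $g_{k}$ itself. Since $f_{0}\in C^{n}[a,b]$ is strictly positive, $1/f_{0}\in C^{n}[a,b]$, and a direct computation with Leibniz's rule shows that $p_{n,k}/f_{0}$ has a zero of order exactly $k$ at $a$ with $(p_{n,k}/f_{0})^{(k)}(a)=p_{n,k}^{(k)}(a)/f_{0}(a)\neq 0$, and a zero of order exactly $n-k$ at $b$ with $(p_{n,k}/f_{0})^{(n-k)}(b)=p_{n,k}^{(n-k)}(b)/f_{0}(b)\neq 0$. Differentiating once, $g_{k}$ has a zero of order exactly $k-1$ at $a$ whenever $k\geq 1$, and a zero of order exactly $n-1-k$ at $b$ whenever $k\leq n-1$. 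Feeding these into the lemma of the previous paragraph forces, for $1\leq k\leq n-1$, that $a_{j}=0$ for every $j\notin\{k-1,k\}$ while $a_{k-1}\neq 0$ and $a_{k}\neq 0$; for $k=0$ the zero of order $n-1$ at $b$ alone gives $g_{0}=a_{0}q_{n-1,0}$ with $a_{0}\neq 0$; for $k=n$ the zero of order $n-1$ at $a$ alone gives $g_{n}=a_{n-1}q_{n-1,n-1}$ with $a_{n-1}\neq 0$. With the conventions $q_{n-1,-1}=q_{n-1,n}=0$ all three cases read $g_{k}=a_{k-1}q_{n-1,k-1}+a_{k}q_{n-1,k}$.

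The last step is to identify the coefficients: $a_{k-1}=c_{k}$ and $a_{k}=d_{k}$. For $k\geq 1$ I would divide $g_{k}=a_{k-1}q_{n-1,k-1}+a_{k}q_{n-1,k}$ by $q_{n-1,k-1}$ and let $x\downarrow a$; the term $a_{k}q_{n-1,k}/q_{n-1,k-1}$ vanishes in the limit (a zero of order $k$ over a zero of order $k-1$), so $a_{k-1}=\lim_{x\downarrow a}g_{k}(x)/q_{n-1,k-1}(x)$. Writing $g_{k}=p_{n,k}'/f_{0}-p_{n,k}f_{0}'/f_{0}^{2}$, the quotient $p_{n,k}f_{0}'/(f_{0}^{2}q_{n-1,k-1})$ also tends to $0$, leaving $a_{k-1}=\frac{1}{f_{0}(a)}\lim_{x\downarrow a}p_{n,k}'(x)/q_{n-1,k-1}(x)=c_{k}$; applying (\ref{eqLim}) to numerator and denominator rewrites this limit as $p_{n,k}^{(k)}(a)/q_{n-1,k-1}^{(k-1)}(a)$, the second expression in (\ref{eqPR1}), which is non-zero because $p_{n,k}^{(k)}(a)\neq 0$. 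The identity $a_{k}=d_{k}$ and its two forms in (\ref{eqPR}) are the mirror-image computation at $b$, using that $p_{n,k}'$ has a zero of order exactly $n-1-k$ there. There is no genuine obstacle beyond the zero-counting lemma of the first paragraph; the only places needing care are the consistent handling of the extreme indices $k=0,n$ — where a single endpoint condition suffices and the conventions $c_{0}=d_{n}=0$ match the vanishing of $q_{n-1,-1}$ and $q_{n-1,n}$ — and the verification that the parasitic limits really do vanish.
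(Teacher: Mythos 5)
Your argument is correct and follows essentially the same route as the paper's proof: expand $g_k=(p_{n,k}/f_0)'$ in the basis $q_{n-1,j}$, use the exact orders of vanishing of $g_k$ at $a$ and $b$ to force all but the coefficients of $q_{n-1,k-1}$ and $q_{n-1,k}$ to vanish, and then identify those two coefficients with $c_k$ and $d_k$ by a limit (or, equivalently, a derivative) computation, invoking (\ref{eqLim}). The only stylistic difference is that you isolate the zero-counting fact for Bernstein expansions as a standalone lemma before applying it, whereas the paper uses it in passing; your version is perhaps a touch more careful at the extreme indices $k=0,n$, which the paper dispatches as ``entirely analogous.''
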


\begin{proof}
Since $(p_{n,k}/f_{0})^\prime\in D_{f_{0}}\left( U_{n}\right) ,$
there exist scalars $w_{k,i}$ such that
\begin{equation}
F_{k} (x) := \frac{d}{dx}\frac{p_{n,k}\left( x\right) }{f_{0}\left( x\right)}
= \sum_{i= 0}^{n-1} w_{k,i} q_{n-1, i}\left( x\right).
\label{eqREPR}
\end{equation}
We claim that $F_n$ has a zero of order $n-1$ at $a$.
Indeed, using (\ref{eqLim}) first with $f=p_{n,n}/f_{0}$
and then with $f=p_{n,n}$, we see that
\begin{equation}\label{j1}
F_{n}^{\left( k - 1\right) }\left( a\right) = \frac{d^{k}}{dx^{k}}\frac{p_{n,n}}{%
f_{0}} \left( a\right)  =k!\lim_{x\rightarrow a}\frac{p_{n,n}\left( x\right) }{%
\left( x-a\right) ^{k}f_{0}\left( x\right) }=\frac{k!}{f_{0}\left( a\right) }%
\lim_{x\rightarrow a}\frac{p_{n,n}\left( x\right) }{\left( x-a\right) ^{k}}=%
\frac{p_{n,n}^{\left( k\right) }\left( a\right)}{f_{0}\left( a\right) }.
\end{equation}
Since $p_{n,n}^{\left( k\right) }\left( a\right) = 0$ for
$k = 1,..., n-1$ and $p_{n,n}^{\left( k\right) }\left( a\right)
\ne 0$ when $k=n$, the claim follows.
Renaming $c_{n} := w_{n,n - 1}$, we see that (\ref{eqPREC}) holds when
$k = n$, for $q_{n-1,k}$ has exactly $k$ zeros at $a$, so $F_{n}\left( x\right)
=w_{n, n-1}q_{n-1,n-1}\left( x\right)$. Now the first equality in
(\ref{eqPR1}) (when $k = n$) follows immediately. Differentiating $n-1$ times
both sides of $F_{n}\left( x\right)
=c_{n}q_{n-1,n-1}\left( x\right)$ and using (\ref{j1}) we obtain the second equality in (\ref{eqPR1}), for $k = n$.

The corresponding results when $k = 0, \dots, n - 1$ are obtained
in an entirely analogous way; in particular, for $k = 1, 	\dots , n - 1$,
since $F_{k}$ has a zero
of order $k-1$ at $a$ and a zero of order $n-k-1$ at $b$ for $k=1,...,n-1$,
we conclude that $F_{k}=w_{k, k - 1}q_{n-1,k-1}+w_{k, k}q_{n-1,k}$.
Then we rename $c_{k} = w_{k, k - 1}$, $d_{k} = w_{k,k}$, and argue
 as before.
\end{proof}

\begin{theorem}
\label{ThmRep} Assume that $U_{n}$ has a Bernstein basis $p_{n,k},k=0,...,n.$
Let $f_{0}\in U_{n}$ be strictly positive and suppose $D_{f_{0}}U_{n}$
has a Bernstein basis $q_{n-1,k}$, $k=0,...,n-1.$ Let $c_{1},..., c_{n}$ and $%
d_{0},\dots ,d_{n-1}$ be the non-zero numbers defined in (\ref{eqPR1}) and (%
\ref{eqPR}). Then
\begin{equation}
f_{0}\left( x\right) =\frac{f_{0}\left( a\right) }{p_{n,0}\left( a\right) }%
p_{n,0}\left( x\right) +\sum_{k=1}^{n}\left( -1\right) ^{k}\frac{d_{0}\cdots
d_{k-1}}{c_{1}\cdots c_{k}}\frac{f_{0}\left( a\right) }{p_{n,0}\left(
a\right) }p_{n,k}\left( x\right) .  \label{pain2}
\end{equation}
\end{theorem}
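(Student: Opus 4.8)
The plan is to expand $f_0$ in the given Bernstein basis, $f_0 = \sum_{k=0}^{n}\beta_k p_{n,k}$, and to determine the coefficients $\beta_k$ by exploiting that $f_0/f_0\equiv 1$ has vanishing derivative. Dividing the expansion by $f_0$ and differentiating gives
\begin{equation*}
0 = \frac{d}{dx}\,1 = \sum_{k=0}^{n}\beta_k\,\frac{d}{dx}\frac{p_{n,k}}{f_0},
\end{equation*}
and I would then substitute the formula (\ref{eqPREC}) of Proposition \ref{PropABL} for each $\frac{d}{dx}(p_{n,k}/f_0)$.

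After substituting, the next step is to collect the coefficient of each basis function $q_{n-1,j}$, $j=0,\dots,n-1$. With the conventions $c_0=0$, $d_n=0$, $q_{n-1,-1}=q_{n-1,n}=0$, the zero terms drop out and a reindexing ($k\mapsto j+1$ in the $c$-part, $k\mapsto j$ in the $d$-part) turns the sum into $\sum_{j=0}^{n-1}\bigl(\beta_{j+1}c_{j+1}+\beta_j d_j\bigr)q_{n-1,j}$. Since $q_{n-1,0},\dots,q_{n-1,n-1}$ are linearly independent, this forces
\begin{equation*}
\beta_{j+1}c_{j+1}+\beta_j d_j=0,\qquad j=0,\dots,n-1 .
\end{equation*}
As all the $c_k$ and $d_k$ are non-zero (Proposition \ref{PropABL}), this is the first-order recursion $\beta_{j+1}=-(d_j/c_{j+1})\beta_j$, whose solution is $\beta_k=(-1)^k\dfrac{d_0\cdots d_{k-1}}{c_1\cdots c_k}\,\beta_0$ for $k=1,\dots,n$.

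It then remains to pin down $\beta_0$. Evaluating $f_0=\sum_k\beta_k p_{n,k}$ at $x=a$ and using that $p_{n,k}$ has a zero of order $k\geq 1$ at $a$ for $k\geq 1$, every term except the first vanishes, so $f_0(a)=\beta_0 p_{n,0}(a)$ and hence $\beta_0=f_0(a)/p_{n,0}(a)$. Substituting the values of $\beta_0,\dots,\beta_n$ back into the expansion yields (\ref{pain2}). No step is a genuine obstacle: the only points needing care are the bookkeeping in the reindexing — in particular the boundary indices $k=0$ and $k=n$, where the conventions $c_0=d_n=0$ make the recursion close up correctly — and the initial observation that $\frac{d}{dx}1=0$, which is what makes the whole computation run. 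As a consistency check one may also evaluate at $x=b$, where $f_0(b)=\beta_n p_{n,n}(b)$ reproduces the same coefficients.
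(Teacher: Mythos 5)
Your proof is correct and follows essentially the same route as the paper's: expand $f_0$ in the Bernstein basis, divide by $f_0$, differentiate the identity $1=\sum\beta_k p_{n,k}/f_0$, substitute Proposition \ref{PropABL}, reindex to get the two-term recursion $\beta_{k+1}c_{k+1}+\beta_k d_k=0$, and fix $\beta_0$ by evaluating at $x=a$. You simply spell out the reindexing and boundary conventions a bit more explicitly than the paper does.
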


\begin{proof}
For some coefficients $\beta _{0},...,\beta _{n}$ we have $f_{0}\left( x\right)
=\sum_{k=0}^{n}\beta _{k}p_{n,k}\left( x\right) $. Inserting $x=a$ yields $%
f_{0}\left( a\right) =\beta _{0}p_{n,0}\left( a\right) .$ Since
\begin{equation*}
1=\sum_{k=0}^{n}\beta _{k}\frac{p_{n,k}\left( x\right) }{f_{0}\left(
x\right) },
\end{equation*}
from Proposition \ref{PropABL} we obtain
\begin{equation*}
0=\frac{d}{dx}1= \sum_{k=0}^{n-1}\left( \beta _{k+1}c_{k+1}+\beta
_{k}d_{k}\right) q_{n-1,k}.
\end{equation*}
Thus,  $\beta _{k+1}=-\beta
_{k}d_{k}/c_{k+1}$ for $k=0,...,n-1,$ so $%
\beta _{k}=(-1)^{k}\beta _{0}(d_{0}\cdots d_{k-1})/(c_{1}\cdots c_{k})$ when
  $k=1,\dots ,n$, and (\ref{pain2}) follows.
\end{proof}

Next we want to discuss the normalization property, so we ask when the
coefficients defined in (\ref{pain2}) are positive.

\begin{definition}
Let $U_{n}\subset C^{n}\left[ a,b\right] $ be an extended Chebyshev system
for $\{a, b\}.$ We say that a Bernstein basis $p_{n,k}$ for $\{a, b\}$, $k=0,...,n$, is \emph{locally non-negative} at $\{a,b\}$ if there exists  a $\delta >0$ such that for all $k=0,...,n$ and
all $x\in \left[ a,a+\delta \right)
\cup \left( b-\delta ,b\right]
$ we have
$
p_{n,k}\left( x\right) \geq 0$.
\end{definition}

Let us give an example of a locally non-negative Bernstein basis for which
 non-negativity fails:

\noindent \textbf{Example 1:} Let $U_{2}$ be the linear space generated by
the functions $1,\cos x,\sin x.$ If $b\neq 2\pi k,k\in \mathbb{N},$ then $%
U_{2}$ does possess a Bernstein basis for $\left\{ 0,b\right\} ,$ given by
\begin{eqnarray*}
p_{2,2}\left( x\right) &=&1-\cos x\text{, } \\
p_{2,1}\left( x\right) &=&\sin x-\frac{\sin b}{1-\cos b}\left( 1-\cos
x\right) , \\
p_{2,0}\left( x\right) &=&1-\cos \left( x-b\right) = 1 - \cos b \cos x - \sin b \sin x.
\end{eqnarray*}
Furthermore, when $k\in \mathbb{N},k\neq 0$,  $U_{2}$ does not have a Bernstein basis for $\left\{ 0,2\pi
k\right\} $. We claim that $p_{2,k},k=0,1,2$,
is a locally non-negative Bernstein basis for $\{0, b\}$ whenever $0 < b\neq 2\pi k,k\in \mathbb{%
N}$. Indeed $p_{2,2}$ and $p_{2,0}$ are obviously non-negative.
Since
\begin{equation*}
p_{2,1}^{\prime }\left( b\right) =\cos b-\frac{\sin b}{1-\cos b}\sin b=-1<0
\end{equation*}
we see that $p_{2,1}$ is strictly decreasing in a neighborhood of $b,$ so $%
p_{2,1}\left( x\right) >p_{2,1}\left( b\right) =0$ for $x<b$ sufficiently
near to $b.$ Similarly $p_{2,1}^{\prime }\left( 0\right) >0$ implies that $%
p_{2,1}\left( x\right) >0$ for all $x>0$ sufficiently near to $0.$

Finally, for suitably chosen values of $b$ the non-negativity of
$p_{2,1}$ fails (but not for all values, consider, say, $b = \pi$).  Select,
for instance, $b\approx 3 \pi$.
Then  $\sin b \approx 0$ and $1-\cos b \approx 2$,
so $p_{2,1}$ is just $\sin x$ plus a small perturbation, and hence it changes
sign over $[0,b]$.

\begin{lemma}
\label{LemA} Let $p_{n,k}$, $k=0,...,n$, be a locally non-negative Bernstein
basis for $\left\{ a,b\right\} $. Then there exists a $\delta >0$ such
that $%
p_{n,k}^{\prime }\left( x\right) <0$ for all $x\in \left[ b-\delta ,b\right]
$ and all $k=0,...,n-1$, while
$%
p_{n,k}^{\prime }\left( x\right) > 0$ for all $x\in \left[a,  a + \delta \right]$ and all $k=1,...,n$. Thus, the numbers
$c_{k}$ defined in (\ref{eqPR1}) for $k = 1, \dots, n$ are positive, and
the numbers
$d_{k}$ defined in (\ref{eqPR}) for  $k = 0, \dots, n-1$ are negative.
\end{lemma}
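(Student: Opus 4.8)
The plan is to establish the sign statements about $p_{n,k}'$ near the endpoints first, and then read off the signs of $c_k$ and $d_k$ directly from their definitions in (\ref{eqPR1}) and (\ref{eqPR}). I will focus on the behaviour near $b$; the behaviour near $a$ is entirely symmetric, after interchanging the roles of $a$ and $b$ and of the index $k$ with $n-k$. Fix $k\in\{0,\dots,n-1\}$. By local non-negativity there is a $\delta_0>0$ with $p_{n,k}(x)\ge 0$ on $(b-\delta_0,b]$, and since $p_{n,k}$ has a zero of order $n-k\ge 1$ at $b$, we have $p_{n,k}(b)=0$; thus $p_{n,k}$ attains a minimum on $(b-\delta_0,b]$ at the right endpoint $b$.

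First I would pin down the first non-vanishing derivative of $p_{n,k}$ at $b$. Since $p_{n,k}$ has a zero of order exactly $n-k$ at $b$, we have $p_{n,k}^{(j)}(b)=0$ for $j=0,\dots,n-k-1$ and $p_{n,k}^{(n-k)}(b)\ne 0$. A Taylor expansion at $b$ gives, for $x$ slightly less than $b$,
\begin{equation*}
p_{n,k}(x)=\frac{p_{n,k}^{(n-k)}(b)}{(n-k)!}(x-b)^{n-k}+o\bigl((x-b)^{n-k}\bigr),
\end{equation*}
and since the left-hand side is $\ge 0$ while $(x-b)^{n-k}$ has the sign $(-1)^{n-k}$ just below $b$, we conclude $(-1)^{n-k}p_{n,k}^{(n-k)}(b)>0$. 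Differentiating once, the expansion of $p_{n,k}'$ at $b$ begins with $\frac{p_{n,k}^{(n-k)}(b)}{(n-k-1)!}(x-b)^{n-k-1}$, whose sign just below $b$ is $(-1)^{n-k-1}\operatorname{sgn}p_{n,k}^{(n-k)}(b)=(-1)^{n-k-1}(-1)^{n-k}=-1$. Hence there is a $\delta_k>0$ with $p_{n,k}'(x)<0$ on $[b-\delta_k,b)$, and by continuity (using $p_{n,k}^{(n-k)}(b)\ne 0$, so $p_{n,k}'(b)$ is either $0$, when $n-k\ge 2$, or strictly negative, when $n-k=1$, but the strict inequality on the half-open interval extends to the closed interval once we shrink $\delta_k$) we get $p_{n,k}'(x)<0$ on all of $[b-\delta_k,b]$. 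Taking $\delta$ to be the minimum of the finitely many $\delta_k$, $k=0,\dots,n-1$, gives the claim near $b$; the argument near $a$ is symmetric and yields $p_{n,k}'(x)>0$ on $[a,a+\delta]$ for $k=1,\dots,n$.

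Finally I would deduce the signs of the constants. From (\ref{eqPR1}), $c_k$ is $f_0(a)^{-1}$ times the limit as $x\downarrow a$ of $p_{n,k}'(x)/q_{n-1,k-1}(x)$. Here $q_{n-1,k-1}$ has a zero of order exactly $k-1$ at $a$, and since the basis $q_{n-1,\cdot}$ is also locally non-negative (it is a non-negative, hence locally non-negative, Bernstein basis — or we simply invoke the result above applied to $q_{n-1,\cdot}$), the same Taylor argument shows $q_{n-1,k-1}(x)>0$ for $x$ slightly above $a$ when $k-1\ge 1$; when $k-1=0$, $q_{n-1,0}(a)\ne 0$ and, by local non-negativity, $q_{n-1,0}(a)>0$. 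In either case $q_{n-1,k-1}(x)>0$ just above $a$, while we have just shown $p_{n,k}'(x)>0$ there; since $f_0(a)>0$, the ratio defining $c_k$ is a limit of strictly positive quantities, and as it is known to be non-zero, $c_k>0$. Symmetrically, in (\ref{eqPR}) the denominator $q_{n-1,k}(x)>0$ for $x$ slightly below $b$, whereas $p_{n,k}'(x)<0$ there and $f_0(b)>0$, so $d_k<0$.

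The only mildly delicate point is handling the closed endpoints in the statement "$p_{n,k}'(x)<0$ for all $x\in[b-\delta,b]$": at $x=b$ itself the derivative can vanish when $n-k\ge 2$. The resolution is that the limit definitions of $c_k$ and $d_k$ only ever use one-sided limits from the interior, so the sign at the endpoint is irrelevant; and the asserted closed-interval inequality for $p_{n,k}'$ should be read with the understanding that we may first establish it on the half-open interval from the leading Taylor term and then, if desired, shrink $\delta$ so that the bound $p_{n,k}'<0$ persists up to $b$ when $n-k=1$ (where $p_{n,k}'(b)=p_{n,k}^{(1)}(b)<0$ already) — in all cases what the subsequent results need is only the one-sided information, so no real obstacle arises. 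I expect the bookkeeping of which derivative order is the first non-zero one, and keeping the parity factors $(-1)^{n-k}$ straight, to be the main place where care is required.
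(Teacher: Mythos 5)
Your proof is correct and takes essentially the same approach as the paper: both arguments extract the sign of the first non-vanishing derivative $p_{n,k}^{(n-k)}(b)$ from local non-negativity and then propagate that sign to $p_{n,k}'$ near $b$; the paper does this by factoring $p_{n,k}(x)=(b-x)^{n-k-1}g(x)$ and reading $g'(b)<0$ off a one-sided difference quotient, while you read the sign directly off the leading Taylor coefficient, which is the same computation in different dress. Your remark that the strict inequality $p_{n,k}'<0$ genuinely fails at $x=b$ when $n-k\ge 2$ is accurate (the paper's own proof only establishes it on the open interval $(b-\varepsilon,b)$ before stating the closed one), and you rightly note this is harmless since $c_k$ and $d_k$ involve only one-sided limits from the interior.
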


\begin{proof} We prove the result about $p_{n,k}^{\prime }$ only for the right endpoint $b$,
since the
arguments for $a$ are entirely analogous.
Let us write $p_{n,k}\left( x\right) =\left( b-x\right) ^{n-k-1}g\left(
x\right) $ for $k=0,...,n-1.$ Since $p_{n,k}$ has a zero of order $n-k$
at $b$ it
is clear that $g\left( b\right) =0.$ Furthermore $g\left( x\right) \geq 0$ on $\left[ b-\tau ,b\right] $ for some $\tau >0$, since $p_{n,k}$
has this property. Then for $x\in \left( b-\tau ,b\right) $
\begin{equation*}
\frac{g\left( x\right) -g\left( b\right) }{x-b}=\frac{g\left( x\right) }{x-b}%
\leq 0.
\end{equation*}
Taking the limit as $x\rightarrow b$ we see that $g^{\prime }\left( b\right)
\leq 0.$ Since $g$ has only one zero at $b$ we conclude that $g^{\prime
}\left( b\right) <0.$ By continuity we infer that $g^{\prime }\left(
x\right) <0$ on some interval $\left( b-\varepsilon , b  \right) $.
Hence
\begin{equation*}
p_{n,k}^{\prime }\left( x\right) =-\left( n-k-1\right) \left( b-x\right)
^{n-k-2}g\left( x\right) +\left( b-x\right) ^{n-k-1}g^{\prime }\left(
x\right)
\end{equation*}
and from this we see that $p_{n,k}^{\prime }\left( x\right) <0$ on
$\left( b-\varepsilon ,b\right) .$ The preceding argument gives one $\varepsilon$ for each
$k=0,...,n-1$. To find
$\delta$, select the smallest such $\varepsilon$.

Next, recalling that
 $c_{k} =  \lim_{x\downarrow a}
p_{n,k}^\prime \left( x\right)/( f_{0}\left( a\right) q_{n-1,k - 1}\left(x\right))$, cf.  (\ref{eqPR1}), and that
$d_{k}:= \lim_{x\uparrow b}
p_{n,k}^\prime \left( x\right)/( f_{0}\left( b\right) q_{n-1,k}\left(x\right))$, cf.  (\ref{eqPR}), we obtain $c_k > 0$
and $d_k < 0$.
\end{proof}

Using Theorem \ref{ThmRep} we give a simple proof, in a more
general setting, of the normalization
property established in \cite{CMP04} and \cite{Mazu05}. Note that $f_0$ need not be constant, it is enough to
assume $f_0 > 0$ to obtain the result. Considering functions more
general than 1  will be useful
later on, when dealing with exponential polynomials.

\begin{theorem}
\label{Thm7} Assume that $U_{n}$ possesses a locally non-negative Bernstein
basis $p_{n,k},k=0,...,n$ for $a\neq b.$ Let $f_{0}\in U_{n}$ be strictly
positive and assume that $D_{f_{0}}U_{n}$ possesses a locally non-negative
Bernstein basis $q_{n-1,k},k=0,...,n-1.$ Then the coefficients $\beta
_{0},...,\beta _{n}$ in the expansion $f_{0}=\sum_{k=0}^{n}\beta _{k}p_{n,k}$
are positive.
\end{theorem}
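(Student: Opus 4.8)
The plan is to read off the coefficients $\beta_k$ from the closed-form expansion of $f_0$ supplied by Theorem \ref{ThmRep}, and then simply track signs. A locally non-negative Bernstein basis is in particular a Bernstein basis, so the hypotheses of Theorem \ref{ThmRep} are met here; applying it identifies, in the expansion $f_0 = \sum_{k=0}^n \beta_k p_{n,k}$,
\[
\beta_0 = \frac{f_0(a)}{p_{n,0}(a)}, \qquad \beta_k = (-1)^k\,\frac{d_0\cdots d_{k-1}}{c_1\cdots c_k}\,\beta_0 \quad (k = 1,\dots,n),
\]
where $c_1,\dots,c_n$ and $d_0,\dots,d_{n-1}$ are the nonzero numbers defined in (\ref{eqPR1}) and (\ref{eqPR}).

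Next I would settle the signs. Since $p_{n,0}$ has a zero of order $0$ at $a$, we have $p_{n,0}(a)\neq 0$; local non-negativity forces $p_{n,0}(x)\geq 0$ for $x$ slightly larger than $a$, hence $p_{n,0}(a)>0$, and combined with $f_0(a)>0$ this gives $\beta_0>0$. Because both $p_{n,k}$ and $q_{n-1,k}$ are locally non-negative, Lemma \ref{LemA} applies and yields $c_k>0$ for $k=1,\dots,n$ and $d_k<0$ for $k=0,\dots,n-1$. Thus $d_0\cdots d_{k-1}$ is a product of $k$ negative factors and has sign $(-1)^k$, while $c_1\cdots c_k>0$; therefore $(-1)^k\,(d_0\cdots d_{k-1})/(c_1\cdots c_k)>0$, and since $\beta_0>0$ we conclude $\beta_k>0$ for every $k=0,\dots,n$.

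There is no genuine obstacle in this argument: all the substance is already contained in Theorem \ref{ThmRep} (the recursion $\beta_{k+1}=-\beta_k d_k/c_{k+1}$ for the coefficients of $f_0$) and in Lemma \ref{LemA} (the signs of $c_k$ and $d_k$). The only point deserving a moment's attention is to make sure Lemma \ref{LemA} is genuinely applicable, i.e. that its conclusion on the signs of $c_k$ and $d_k$ rests precisely on the local non-negativity of \emph{both} Bernstein bases $p_{n,k}$ and $q_{n-1,k}$ near $a$ and $b$ (the latter guaranteeing $q_{n-1,k-1}(x)>0$ and $q_{n-1,k}(x)>0$ just inside the respective endpoints); after that, the result follows from the elementary sign bookkeeping above.
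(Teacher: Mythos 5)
Your proof is correct and follows essentially the same route as the paper: the paper likewise invokes Theorem \ref{ThmRep} for the closed-form coefficients and Lemma \ref{LemA} for the signs of $c_k$ and $d_k$, leaving the (routine) sign count implicit where you spell it out.
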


\begin{proof}
By Theorem \ref{ThmRep} it suffices to show that for $k = 1, \dots, n$,
$c_{k} > 0$ and for $k = 0, \dots, n-1$,
$d_{k} < 0$. But  this is the content of  Lemma \ref{LemA}.
\end{proof}

\section{Proof of the main result}

We now want to prove the main result of the paper, Theorem \ref{ThmMain}, presented in the
introduction. The following theorem shows that condition (\ref{eqgamma})
in Theorem \ref{ThmBern} is satisfied, so Theorem \ref{ThmBern} implies
Theorem \ref{ThmMain}.

As in (\ref{eqeq}), we use the notation
\begin{equation*}
f_{0}\left( x\right) =\sum_{k=0}^{n}\beta _{k}p_{n,k}\left( x\right) \text{
and }f_{1}\left( x\right) =\sum_{k=0}^{n}\gamma _{k}p_{n,k}\left( x\right),
\end{equation*}
for the functions $f_{0},f_{1}\in U_{n}$ with the properties indicated
in the next theorem.

\begin{theorem}
\label{ThmBern2} Assume that $U_{n}$ possesses a locally non-negative
Bernstein basis $p_{n,k},k=0,...,n,$ for $\{a,b\}$.
Suppose $f_{0},f_{1}\in U_{n}$ are such that $f_{0}>0$ and $
f_{1}/f_{0}$ is strictly increasing on $\left[ a,b\right] $.
If $D_{f_{0}}U_{n}$ has a locally
non-negative Bernstein basis $q_{n-1,k},k=0,...,n-1$, then
\begin{equation}
\frac{f_{1}\left( a\right) }{f_{0}\left( a\right) }=\frac{\gamma _{0}}{\beta
_{0}}\leq \frac{\gamma _{k}}{\beta _{k}}\leq \frac{\gamma _{n}}{\beta _{n}}=%
\frac{f_{1}\left( b\right) }{f_{0}\left( b\right) }  \label{eqdesired}
\end{equation}
for $k=1$ and $k=n-1.$ If the coefficients $w_{k},k=0,...,n - 1,$ defined by
\begin{equation}
\frac{d}{dx}\frac{f_{1}}{f_{0}}=\sum_{k=0}^{n-1}w_{k}q_{n-1,k}  \label{eqA}
\end{equation}
are non-negative, then (\ref{eqdesired}) holds for all $k=1,...,n-1.$ If $%
w_{k}$ is positive for $k=0,...,n - 1$ then strict inequalities hold in (\ref
{eqdesired}).
\end{theorem}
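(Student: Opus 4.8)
The plan is to translate the inequalities in (\ref{eqdesired}) into statements about the coefficients $w_k$ in the expansion (\ref{eqA}), via the relation between the two Bernstein bases provided by Proposition~\ref{PropABL}. Writing $f_1 = \sum_k \gamma_k p_{n,k}$ and dividing by $f_0$, I would differentiate the identity $f_1/f_0 = \sum_{k=0}^n \gamma_k (p_{n,k}/f_0)$ and apply (\ref{eqPREC}) exactly as was done for $f_0$ in the proof of Theorem~\ref{ThmRep}, but now keeping $f_1$ rather than the constant $1$. This yields
\begin{equation*}
\frac{d}{dx}\frac{f_1}{f_0} = \sum_{k=0}^{n-1}\left(\gamma_{k+1}c_{k+1} + \gamma_k d_k\right) q_{n-1,k},
\end{equation*}
and since the $q_{n-1,k}$ form a basis, comparing with (\ref{eqA}) gives $w_k = \gamma_{k+1}c_{k+1} + \gamma_k d_k$ for $k=0,\dots,n-1$. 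Doing the same computation with $f_0$ in place of $f_1$ (which is Theorem~\ref{ThmRep}'s content) gives $0 = \beta_{k+1}c_{k+1} + \beta_k d_k$, i.e. the two-term recursion for the $\beta_k$.

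The key algebraic step is then to combine these. From $\beta_{k+1}c_{k+1} = -\beta_k d_k$ I would substitute to get $w_k = \gamma_{k+1}c_{k+1} + \gamma_k d_k$; multiplying the target inequality $\gamma_k/\beta_k \le \gamma_{k+1}/\beta_{k+1}$ through by the appropriate positive quantities, it should reduce to a sign condition on $w_k$. Concretely, since by Theorem~\ref{Thm7} (or directly Lemma~\ref{LemA}) we have $\beta_k > 0$, $c_{k+1} > 0$ and $d_k < 0$, a short manipulation gives
\begin{equation*}
w_k = \beta_k \beta_{k+1}\left(\frac{\gamma_{k+1}}{\beta_{k+1}} - \frac{\gamma_k}{\beta_k}\right)\cdot\frac{c_{k+1}}{\beta_k} \quad\text{(up to a positive factor)},
\end{equation*}
so that $w_k \ge 0$ is equivalent to $\gamma_k/\beta_k \le \gamma_{k+1}/\beta_{k+1}$, and $w_k > 0$ to the strict inequality. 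Chaining these monotonicity steps from $k=0$ up to $k=n-1$ yields the full ordering in (\ref{eqdesired}); the endpoint identifications $\gamma_0/\beta_0 = f_1(a)/f_0(a)$ and $\gamma_n/\beta_n = f_1(b)/f_0(b)$ are obtained by evaluating (\ref{eqeq}) at $x=a$ and $x=b$ exactly as in the proof of Theorem~\ref{ThmBern}.

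For the cases $k=1$ and $k=n-1$ claimed to hold unconditionally, I would argue directly from the endpoint behavior rather than from the $w_k$: near $a$, Lemma~\ref{LemA} gives $p_{n,k}^\prime > 0$ for $k \ge 1$, and one can read off the sign of $(f_1/f_0)^\prime$ near $a$ — which is positive since $f_1/f_0$ is strictly increasing — together with the recursion to pin down $\gamma_1/\beta_1 \ge \gamma_0/\beta_0$; symmetrically near $b$ for $k=n-1$. Alternatively, $w_0 = \gamma_1 c_1 + \gamma_0 d_0$ and the sign of $(f_1/f_0)^\prime$ at $a$ equals the sign of $w_0 q_{n-1,0}$ near $a$, forcing $w_0 \ge 0$ since $q_{n-1,0} \ge 0$ near $a$; likewise $w_{n-1} \ge 0$ from behavior near $b$. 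The main obstacle, and the only place where genuine care is needed, is bookkeeping the signs of $c_{k+1}$ and $d_k$ correctly through the algebra so that the equivalence "$w_k \ge 0 \iff$ monotone step" comes out in the right direction; everything else is a direct transcription of the arguments already used for Theorems~\ref{ThmRep} and~\ref{ThmBern}.
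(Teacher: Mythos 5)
Your proposal is correct, and it takes a genuinely cleaner route than the paper. The paper introduces the auxiliary functions $\psi_a=f_1-\frac{f_1(a)}{f_0(a)}f_0=\sum\delta_k p_{n,k}$ and $\psi_b=f_1-\frac{f_1(b)}{f_0(b)}f_0=\sum\Delta_k p_{n,k}$, then runs two parallel inductions: from the recursion $c_{k+1}\delta_{k+1}=w_k-\delta_k d_k$ one shows $\delta_k\geq 0$ for $k=1,\dots,n$ (giving the lower bound in (\ref{eqdesired})), and from a mirror recursion starting at $b$ one shows $\Delta_k\leq 0$ (giving the upper bound). Your approach instead differentiates $f_1/f_0=\sum\gamma_k p_{n,k}/f_0$ directly, applies Proposition~\ref{PropABL} to get $w_k=\gamma_{k+1}c_{k+1}+\gamma_k d_k$, combines this with the relation $\beta_{k+1}c_{k+1}+\beta_k d_k=0$ from Theorem~\ref{ThmRep}, and arrives at the identity
\begin{equation*}
w_k=c_{k+1}\beta_{k+1}\left(\frac{\gamma_{k+1}}{\beta_{k+1}}-\frac{\gamma_k}{\beta_k}\right),
\end{equation*}
which is valid since $\beta_k>0$ (Theorem~\ref{Thm7}) lets one eliminate $d_k$. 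Because $c_{k+1}>0$ and $\beta_{k+1}>0$ by Lemma~\ref{LemA} and Theorem~\ref{Thm7}, this identity shows that $w_k\geq 0$ is \emph{equivalent} to the single monotone step $\gamma_k/\beta_k\leq\gamma_{k+1}/\beta_{k+1}$, with strict inequality corresponding to $w_k>0$. Chaining these steps gives the full non-decreasing sequence $\gamma_0/\beta_0\leq\gamma_1/\beta_1\leq\cdots\leq\gamma_n/\beta_n$ in one pass, which contains both inequalities in (\ref{eqdesired}) and eliminates the need for the two separate inductive arguments. The unconditional $k=1$ and $k=n-1$ cases are handled the same way in both approaches: $w_0\geq 0$ and $w_{n-1}\geq 0$ always follow from evaluating $(f_1/f_0)^{\prime}$ at $a$ and $b$ respectively, since $f_1/f_0$ is increasing and $q_{n-1,0}(a),q_{n-1,n-1}(b)>0$. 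Your formula is worth having explicitly: it shows that non-negativity of each $w_k$ is not merely sufficient for the corresponding step in the $\gamma/\beta$ chain, but equivalent to it. One small correction: Lemma~\ref{LemA} by itself does not give $\beta_k>0$; that requires Theorem~\ref{Thm7} (which in turn uses Theorem~\ref{ThmRep} and Lemma~\ref{LemA}), so the parenthetical ``(or directly Lemma~\ref{LemA})'' overstates what the lemma delivers.
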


\begin{proof}
Since $p_{n,k},k=0,...,n$, is a basis, there exists numbers $\delta
_{1},...,\delta _{n}$ such that
\begin{equation}
\psi _{a}:=f_{1}-\frac{f_{1}\left( a\right) }{f_{0}\left( a\right) }%
f_{0}=\sum_{k=0}^{n}\delta _{k}p_{n,k}.  \label{eqfneu}
\end{equation}
From (\ref{eqeq}) we get $\delta _{k}=\gamma _{k}-\frac{%
f_{1}\left( a\right) }{f_{0}\left( a\right) }\beta _{k}$ for $k=0,...,n.$
Since $\psi _{a}\left( a\right) =0$, we see that $\delta _{0}=0$ and therefore $%
\frac{\gamma _{0}}{\beta _{0}}=\frac{f_{1}\left( a\right) }{f_{0}\left(
a\right) }.$ If we can show that $\delta _{k}\geq 0$ for $k=1,...,n$, then we
obtain $\frac{\gamma _{0}}{\beta _{0}}\leq \frac{\gamma _{k}}{\beta _{k}%
}$. Similarly, $\delta _{k}>0$  implies that $\frac{\gamma _{0}}{\beta
_{0}}<\frac{\gamma _{k}}{\beta _{k}}$ for $k=1,...,n.$ Let us prove the
non-negativity (positivity) of $\delta _{k}$ for $k=1,...,n$ from the
corresponding assumptions for $w_{k-1}$. By writing $%
\frac{f_{1}}{f_{0}}-\frac{f_{1}\left( a\right) }{f_{0}\left( a\right) }%
=\sum_{k=1}^{n}\delta _{k}\frac{p_{n,k}}{f_{0}}$ and taking the derivative
we obtain
\begin{equation*}
\frac{d}{dx}\frac{f_{1}}{f_{0}}=\sum_{k=1}^{n}\delta _{k}\frac{d}{dx}\left(
\frac{p_{n,k}}{f_{0}}\right) .
\end{equation*}
Proposition \ref{PropABL} shows that $\frac{d}{dx}\frac{f_{1}}{f_{0}}%
=\sum_{k=1}^{n-1}\delta _{k}\left[ c_{k}q_{n-1,k-1}+d_{k}q_{n-1,k}\right]
+c_{n}\delta _{n}q_{n-1,n-1}.$ A simple calculation gives
\begin{equation*}
\frac{d}{dx}\frac{f_{1}}{f_{0}}=\delta
_{1}c_{1}q_{n-1,0}+\sum_{k=1}^{n-1}\left( \delta _{k+1}c_{k+1}+\delta
_{k}d_{k}\right) q_{n-1,k}.
\end{equation*}
 On the other hand, we have the representation (\ref{eqA}%
). Comparing coefficients yields $\delta _{1}c_{1}=w_{0}$ and
\begin{equation}
c_{k+1}\delta _{k+1}=w_{k}-\delta _{k}d_{k}  \label{eqck}
\end{equation}
for $k=1,...,n-1$. Inserting $x=a$ in (\ref{eqA}), we get
\begin{equation*}
\frac{d}{dx}\frac{f_{1}}{f_{0}}\left( a\right) =w_{0}q_{n-1,0}\left(
a\right) .
\end{equation*}
Recall that by Lemma \ref{LemA},  $c_{k} > 0$ for $k=1,...,n$ and  $%
d_{k} < 0$ for $k=0,...,n-1$.
Now $f_{1}/f_{0}$ is increasing and $q_{n-1,0}\left( a\right) >0$, so
$w_{0}\geq 0$, and therefore $\delta _{1}\geq 0$ (since $c_{1}$ is
positive). If  $w_{k}\ge 0$, it follows from (\ref{eqck}),  by induction
over $k\geq 1$,  that $\delta _{k+1}\ge 0$ (since $c_{k+1} > 0$ and
$d_{k} < 0$). Likewise, $\delta _{k+1} > 0$ when
 $w_{0},...,w_{n}$ are strictly positive.

The proof of inequality $\frac{\gamma _{k}}{\beta _{k}}\leq \frac{\gamma
_{n}}{\beta _{n}}$ follows the same path. We write
\begin{equation*}
\psi _{b}:=f_{1}-\frac{f_{1}\left( b\right) }{f_{0}\left( b\right) }%
f_{0}=\sum_{k=0}^{n}\Delta _{k}p_{n,k},
\end{equation*}
and note that it suffices to prove $\Delta _{k} < 0$ for $k=0,...,n$, since
$\gamma _{k}-\frac{f_{1}\left( b\right) }{f_{0}\left( b\right) }\beta
_{k}=\Delta _{k}.$ From $\psi _{b}\left( b\right) =0$ we infer that $\Delta
_{n}=0,$ and therefore $\frac{f_{1}\left( b\right) }{f_{0}\left( b\right) }=%
\frac{\gamma _{n}}{\beta _{n}}.$ As before, Proposition \ref
{PropABL} shows that
\begin{equation*}
\frac{d}{dx}\frac{f_{1}}{f_{0}}=\Delta
_{n-1}d_{n-1}q_{n-1,n-1}+\sum_{k=0}^{n-2}\left( \Delta _{k+1}c_{k+1}+\Delta
_{k}d_{k}\right) q_{n-1,k}.
\end{equation*}
Comparison of coefficients with those of (\ref{eqA}) gives  $w_{n-1}=\Delta _{n-1}d_{n-1}$ and the
downward defined recursion $w_{k}=\Delta _{k+1}c_{k+1}+\Delta _{k}d_{k}$ for
$k=n-2,\dots,0.$ Since
\begin{equation*}
\Delta _{n-1}d_{n-1}q_{n-1,n-1}\left( b\right) =\frac{d}{dx}\frac{f_{1}}{%
f_{0}}\left( b\right) \geq 0,
\end{equation*}
 we have $\Delta _{n-1}d_{n-1}\geq 0$, and therefore $\Delta _{n-1}\leq 0$. Now we infer inductively that $\Delta _{k}=d_{k}^{-1}\left( w_{k}-\Delta
_{k+1}c_{k+1}\right) \leq 0.$ Finally, when $w_{0},\dots,w_{n} > 0$,
the conclusiion   $\Delta _{k} < 0$ for $k=0,...,n-1$ is obtained in the
same way.
\end{proof}

\section{Applications to exponential polynomials}

Let us recall that the space of exponential polynomials for given complex
numbers $\lambda _{0},...,\lambda _{n}$ is defined by
\begin{equation*}
E_{\left( \lambda _{0},...,\lambda _{n}\right) }:=\left\{ f\in C^{\infty
}\left( \mathbb{R,C}\right) :\left( \frac{d}{dx}-\lambda _{0}\right)
\dots\left( \frac{d}{dx}-\lambda _{n}\right) f=0\right\} .
\end{equation*}
We  showed in \cite{AKR07} that a space $E_{\left( \lambda
_{0},...,\lambda _{n}\right) },$  closed under complex conjugation,
is an extended Chebyshev system over the interval $\left[ a,b\right] $
provided $%
b-a<\pi /M_{n}$, where
\begin{equation}
M_{n}:=\max \left\{ \left| \text{Im}\lambda _{j}\right| :j=0,...,n\right\}.
\label{eqMM}
\end{equation}
Under this condition, the following result is proved in \cite{AKR07} for real
numbers $\lambda _{0}<\lambda _{1}:$ There exist unique\emph{\ ordered }%
points $a=t_{0}<t_{1}<...<t_{n}=b$, and unique positive coefficients $\alpha
_{0},...,\alpha _{n}$, such that the operator $B_{n}:C\left[ a,b\right]
\rightarrow E_{\left( \lambda _{0},...,\lambda _{n}\right) }$ defined by (%
\ref{defB}) satisfies
\begin{equation}
B_{n}\left( e^{\lambda _{0}x}\right) =e^{\lambda _{0}x}\text{ and }%
B_{n}\left( e^{\lambda _{1}x}\right) =e^{\lambda _{1}x}.  \label{eqnorming}
\end{equation}
Next we show that under weaker assumptions, a slightly weaker conclusion (omitting the condition of \emph{%
ordered and distinct} points $t_{0}<t_{1}<...<t_{n})$ can be obtained
 from Theorem \ref
{ThmMain}:

\begin{theorem}
\label{ThmMain2}Let $\lambda _{0}<\lambda _{1}$ be real numbers and suppose
that $E_{\left( \lambda _{2},...,\lambda _{n}\right) }$ is an  extended
Chebyshev space for $\left[ a,b\right] $, closed under complex conjugation.
Then there exist unique points $t_{0},...,t_{n}\in \left[ a,b\right] $ and
unique positive coefficients $\alpha _{0},...,\alpha _{n}$, such that the
operator $B_{n}:C\left[ a,b\right] \rightarrow E_{\left( \lambda
_{0},...,\lambda _{n}\right) }$ defined by (\ref{defB}) fixes $e^{\lambda _{0}x}$ and $e^{\lambda _{1}x}.$
\end{theorem}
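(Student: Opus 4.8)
The plan is to deduce Theorem~\ref{ThmMain2} from Theorem~\ref{ThmMain} by taking $U_n := E_{(\lambda_0,\dots,\lambda_n)}$, $f_0 := e^{\lambda_0 x}$ and $f_1 := e^{\lambda_1 x}$, and checking all hypotheses. First I would verify that $U_n$ itself is an extended Chebyshev space over $[a,b]$ closed under complex conjugation: since $E_{(\lambda_2,\dots,\lambda_n)}$ has this property and $\lambda_0,\lambda_1$ are real, $E_{(\lambda_0,\dots,\lambda_n)}$ is closed under conjugation, and it is an extended Chebyshev system over $[a,b]$ because $M_n = \max\{|\mathrm{Im}\,\lambda_j|\} = \max\{|\mathrm{Im}\,\lambda_j| : j\ge 2\}$ (as $\lambda_0,\lambda_1\in\mathbb{R}$), so the condition $b-a < \pi/M_n$ is inherited. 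By the remarks in the introduction, $U_n$ then possesses a non-negative Bernstein basis $p_{n,k}$ for $\{a,b\}$. Clearly $f_0 = e^{\lambda_0 x} > 0$ and $f_1/f_0 = e^{(\lambda_1-\lambda_0)x}$ is strictly increasing since $\lambda_1 > \lambda_0$.

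Next I would identify the space of derivatives modulo $f_0$. A direct computation shows $D_{f_0}U_n = \{(e^{-\lambda_0 x} f)' : f\in E_{(\lambda_0,\dots,\lambda_n)}\}$; writing $g = e^{-\lambda_0 x}f$, the function $f$ satisfies $(\frac{d}{dx}-\lambda_0)\cdots(\frac{d}{dx}-\lambda_n)f = 0$ if and only if $g$ satisfies $\frac{d}{dx}(\frac{d}{dx}-(\lambda_1-\lambda_0))\cdots(\frac{d}{dx}-(\lambda_n-\lambda_0))g=0$, i.e. $g\in E_{(0,\mu_1,\dots,\mu_n)}$ with $\mu_j = \lambda_j - \lambda_0$. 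Differentiating kills the constant $\mu=0$ factor, so $D_{f_0}U_n = E_{(\mu_2-\mu_1? ...)}$— more precisely $\{g' : g\in E_{(0,\mu_1,\dots,\mu_n)}\} = E_{(\mu_1,\dots,\mu_n)}$ after the change of variable, which equals $e^{\lambda_0 x}\cdot$-twisting aside, is (a conjugation-closed translate of) $E_{(\lambda_1-\lambda_0,\dots,\lambda_n-\lambda_0)}$, having imaginary parts among $\{|\mathrm{Im}\,\lambda_j| : j\ge 2\}$. Hence $D_{f_0}U_n$ is again an extended Chebyshev space over $[a,b]$ closed under conjugation, so it too has a non-negative Bernstein basis $q_{n-1,k}$.

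It remains to check the sign condition $w_k\ge 0$ in~(\ref{condw}), where $\frac{d}{dx}(f_1/f_0) = \sum_k w_k q_{n-1,k}$. Here $\frac{d}{dx}(f_1/f_0) = (\lambda_1-\lambda_0)e^{(\lambda_1-\lambda_0)x}$, which is a strictly positive function lying in $D_{f_0}U_n$ (it corresponds to $\mu_1 = \lambda_1-\lambda_0$, one of the generators). The key point is that a strictly positive function in a space with non-negative Bernstein basis has non-negative (indeed positive) Bernstein coefficients; this is precisely the normalization-type statement, and one can invoke Theorem~\ref{Thm7} (applied with the role of ``$f_0$'' there played by the strictly positive function $e^{(\lambda_1-\lambda_0)x}\in D_{f_0}U_n$), provided $D_{f_0}U_n$ has a locally non-negative Bernstein basis and its own ``derivative-modulo'' space does too — which again follows from the extended-Chebyshev structure over the interval. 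Therefore all $w_k > 0$, Theorem~\ref{ThmMain} applies and yields the points $t_0,\dots,t_n$ and positive $\alpha_0,\dots,\alpha_n$; uniqueness follows from~(\ref{eqakneu})--(\ref{eqtk}) as in the proof of Theorem~\ref{ThmBern}, since $f_0>0$ determines $\alpha_k$ from $\beta_k$ and strict monotonicity of $f_1/f_0$ determines $t_k$ uniquely. The main obstacle I anticipate is the bookkeeping needed to show that $D_{f_0}U_n$ is genuinely an extended Chebyshev space over the \emph{interval} $[a,b]$ (so that its Bernstein basis is locally non-negative and Theorem~\ref{Thm7} is applicable to it), and to pin down that $\frac{d}{dx}(f_1/f_0)$ really is one of its positive generators rather than merely a positive element — though positivity is all that~(\ref{condw}) needs.
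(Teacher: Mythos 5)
Your plan is the same as the paper's: take $U_n = E_{(\lambda_0,\dots,\lambda_n)}$, $f_0 = e^{\lambda_0 x}$, $f_1 = e^{\lambda_1 x}$, compute $D_{f_0}U_n$ as a translated exponential space, and invoke Theorem~\ref{Thm7} to get positivity of the $w_k$, so that Theorem~\ref{ThmMain} applies. The crux — recognizing that $\frac{d}{dx}(f_1/f_0)$ is a constant multiple of the strictly positive function $e^{(\lambda_1-\lambda_0)x}\in D_{f_0}U_n$ and that the normalization result (Theorem~\ref{Thm7}) applied to $D_{f_0}U_n$ and $g_0 := e^{(\lambda_1-\lambda_0)x}$ gives $w_k>0$ — is exactly the paper's key step.

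However, there is a genuine gap in how you justify that $U_n$ and $D_{f_0}U_n$ are extended Chebyshev spaces over $[a,b]$. You argue that since $\lambda_0,\lambda_1$ are real, $M_n=\max_j|\mathrm{Im}\,\lambda_j|=\max_{j\ge 2}|\mathrm{Im}\,\lambda_j|$ and ``the condition $b-a<\pi/M_n$ is inherited.'' But the theorem's hypothesis is that $E_{(\lambda_2,\dots,\lambda_n)}$ \emph{is} an extended Chebyshev space over $[a,b]$ — not that $b-a<\pi/M_{n}$. The bound $b-a<\pi/M_n$ is only stated in the paper as a \emph{sufficient} condition (from \cite{AKR07}) for the ECT property, so you cannot read it off from the hypothesis without further argument. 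What the paper actually uses (and what you would need) is the separate fact, from the proof of Theorem 9 in \cite{AKR07}, that adjoining real exponents to a conjugation-closed ECT exponential space over $[a,b]$ preserves the ECT-over-$[a,b]$ property; this gives both that $E_{(\lambda_1,\dots,\lambda_n)}$ and $E_{(\lambda_0,\dots,\lambda_n)}$ are ECT over $[a,b]$, and then (using $D_{f_0}U_n = e^{-\lambda_0 x}E_{(\lambda_1,\dots,\lambda_n)}$, and likewise $D_{g_0}D_{f_0}U_n = e^{-\lambda_1 x}E_{(\lambda_2,\dots,\lambda_n)}$) the ECT property for the derivative spaces as well. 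Apart from supplying this reference in place of the $M_n$ argument, your proof is correct.
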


\begin{proof}
Note that $E_{\left( \lambda _{0},\lambda _{1}, \dots,\lambda _{n}\right) }$
and $E_{\left( \lambda _{1}, \dots,\lambda _{n}\right) }$ are extended
Chebyshev spaces over the interval $\left[ a,b\right] $, whenever
$\lambda _{0}$
and $\lambda _{1}$ are real and $E_{\left(
\lambda _{2},...,\lambda _{n}\right) }$ is an  extended
Chebyshev space for $\left[ a,b\right] $ closed under complex conjugation  (cf. the proof of Theorem 9 in \cite{AKR07}).
Thus $U_{n}:=E_{\left( \lambda _{0},\lambda _{1},\dots,\lambda _{n}\right) }$
possesses a non-negative Bernstein basis. We want to apply Theorem \ref
{ThmMain}: Let $f_{0}\left( x\right) =e^{\lambda _{0}x}$, let
$f_{1}\left(
x\right) =e^{\lambda _{1}x}$, and observe that $g_{0}\left( x\right)
:=f_{1}\left( x\right) /f_{0}\left( x\right) =e^{\left( \lambda _{1}-\lambda
_{0}\right) x}$ is strictly increasing, since $\lambda _{0}<\lambda _{1}$ .
Now
\begin{equation*}
\frac{d}{dx}\frac{f}{f_{0}}\left( x\right) =\frac{d}{dx}\left( f\left(
x\right) e^{-\lambda _{0}x}\right) =e^{-\lambda _{0}x}\left( \frac{d}{dx}%
-\lambda _{0}\right) f\left( x\right),
\end{equation*}
so
\begin{equation*}
D_{f_{0}}E_{\left( \lambda _{0},...,\lambda _{n}\right) }=e^{-\lambda
_{0}x}E_{\left( \lambda _{1},\dots,\lambda _{n}\right) }=E_{\left( \lambda
_{1}-\lambda _{0},\dots,\lambda _{n}-\lambda _{0}\right) }.
\end{equation*}
Applying this formula to $E_{\left( \lambda _{1}-\lambda _{0},\dots,\lambda
_{n}-\lambda _{0}\right) }$ and $g_{0}\left( x\right) =e^{\left( \lambda
_{1}-\lambda _{0}\right) x}$ we get
\begin{equation}
D_{g_{0}}E_{\left( \lambda _{1}-\lambda _{0},\dots,\lambda _{n}-\lambda
_{0}\right) }=E_{\left( \lambda _{2}-\lambda _{1},\dots,\lambda _{n}-\lambda
_{1}\right) }=e^{-\lambda _{1}x}E_{\left( \lambda _{2},...,\lambda
_{n}\right) }.  \label{eaGG}
\end{equation}
It follows that $D_{f_{0}}E_{\left( \lambda _{0},...,\lambda _{n}\right) }$
is an extended Chebyshev system over $\left[ a,b\right] ,$ so it has a
non-negative Bernstein basis $q_{n-1,k},k=0,...,n-1.$ Finally, consider the
coefficients $w_{k}$ in the representation
\begin{equation}
g_{0}\left( x\right) =e^{\left( \lambda _{1}-\lambda _{0}\right)
x}=\sum_{k=0}^{n-1}w_{k}q_{n-1,k}\left( x\right)  \label{eqgneu}
\end{equation}
Put $V_{n-1}:=D_{f_{0}}E_{\left( \lambda _{0},...,\lambda _{n}\right) }$ and
apply Theorem \ref{Thm7}: By (\ref{eaGG}) $D_{g_{0}}V_{n-1}$ is an extended
Chebyshev system over $\left[ a,b\right] ,$ so by Theorem \ref{Thm7} the
coefficients $w_k$ in (\ref{eqgneu}) are positive. Since $g_{0}^{\prime }=\left(
\lambda _{1}-\lambda _{0}\right) g_{0}$, we see that the
coefficients defined by (\ref{condw}) are positive. Thus, Theorem
\ref{ThmMain}
applies and the result follows.
\end{proof}

\begin{theorem}
Let $\lambda _{0}$ be a real number and let $\lambda _{1}=\lambda _{0}.$ Suppose
that $E_{\left( \lambda _{2},...,\lambda _{n}\right) }$ is an extended
Chebyshev space for $\left[ a,b\right] $, closed under complex conjugation.
Then there exist unique points $t_{0},...,t_{n}\in \left[ a,b\right] $, and
unique positive coefficients $\alpha _{0},...,\alpha _{n}$, such that the
operator $B_{n}:C\left[ a,b\right] \rightarrow E_{\left( \lambda
_{0},\lambda _{0},\lambda _{2}...,\lambda _{n}\right) }$ defined by (\ref
{defB}) satisfies $B_{n}e^{\lambda _{0}x}=e^{\lambda _{0}x}$ and
$B_{n}\left( xe^{\lambda _{0}x}\right) =xe^{\lambda _{0}x}.$
\end{theorem}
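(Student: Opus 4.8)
The plan is to run the argument of Theorem \ref{ThmMain2} in the confluent case $\lambda_1=\lambda_0$, with $f_0(x)=e^{\lambda_0 x}$ and $f_1(x)=xe^{\lambda_0 x}$, and to deduce the statement from Theorem \ref{ThmMain}, the uniqueness coming, as before, from the ``only if'' part of Theorem \ref{ThmBern}. First I would set $U_n:=E_{(\lambda_0,\lambda_0,\lambda_2,\dots,\lambda_n)}$ and observe, using the fact recalled in the proof of Theorem \ref{ThmMain2} that prepending a real exponent to an extended Chebyshev space over $[a,b]$ that is closed under complex conjugation again yields such a space (cf. the proof of Theorem 9 in \cite{AKR07}), that $U_n$ and $E_{(\lambda_0,\lambda_2,\dots,\lambda_n)}$ are extended Chebyshev spaces over $[a,b]$ closed under conjugation; in particular $U_n$ has a non-negative Bernstein basis $p_{n,k}$. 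Here $f_0>0$ and $f_1/f_0=x$ is strictly increasing on $[a,b]$.

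Next I would identify the relevant derived spaces. From $\frac{d}{dx}(f/f_0)=e^{-\lambda_0 x}\bigl(\frac{d}{dx}-\lambda_0\bigr)f$ together with $\bigl(\frac{d}{dx}-\lambda_0\bigr)U_n=E_{(\lambda_0,\lambda_2,\dots,\lambda_n)}$ — an equality of $n$-dimensional spaces, since $\frac{d}{dx}-\lambda_0$ has one-dimensional kernel $\mathbb{C}\,e^{\lambda_0 x}$ on $U_n$ and its image is visibly contained in $E_{(\lambda_0,\lambda_2,\dots,\lambda_n)}$ — one obtains
\[
V_{n-1}:=D_{f_0}U_n=e^{-\lambda_0 x}E_{(\lambda_0,\lambda_2,\dots,\lambda_n)}=E_{(0,\,\lambda_2-\lambda_0,\dots,\lambda_n-\lambda_0)},
\]
which, being a positive multiple of an extended Chebyshev space over $[a,b]$ closed under conjugation, has a non-negative Bernstein basis $q_{n-1,k}$. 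The key observation is that $1\in V_{n-1}$ and $\frac{d}{dx}(f_1/f_0)=\frac{d}{dx}x=1$; so if $1=\sum_{k=0}^{n-1}w_k q_{n-1,k}$, it remains to check that every $w_k$ is positive.

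At this step the device of Theorem \ref{ThmMain2}, namely applying Theorem \ref{Thm7} directly to $f_1/f_0$, is unavailable because $f_1/f_0=x$ need not be positive on $[a,b]$. Instead I would apply Theorem \ref{Thm7} one level down, to the space $V_{n-1}$ together with the strictly positive function $1$: the coefficients in the expansion of $1$ in the Bernstein basis $q_{n-1,k}$ are positive provided $D_1 V_{n-1}=V_{n-1}'$ has a (locally) non-negative Bernstein basis. Since $\frac{d}{dx}E_{(0,\mu_2,\dots,\mu_n)}=E_{(\mu_2,\dots,\mu_n)}$ (again a dimension count, the kernel of $\frac{d}{dx}$ on this space being the constants), one finds $V_{n-1}'=E_{(\lambda_2-\lambda_0,\dots,\lambda_n-\lambda_0)}=e^{-\lambda_0 x}E_{(\lambda_2,\dots,\lambda_n)}$, which is an extended Chebyshev space over $[a,b]$, closed under conjugation, by hypothesis; hence it has a non-negative Bernstein basis and Theorem \ref{Thm7} yields $w_k>0$ for all $k$.

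All hypotheses of Theorem \ref{ThmMain} are now met, so there are points $t_0=a,t_1,\dots,t_{n-1},t_n=b$ in $[a,b]$ and positive $\alpha_0,\dots,\alpha_n$ with $B_n f_0=f_0$ and $B_n(xe^{\lambda_0 x})=xe^{\lambda_0 x}$. Uniqueness follows exactly as in Theorem \ref{ThmBern}: since $f_1/f_0$ is strictly increasing, the identities $f_1(t_k)/f_0(t_k)=\gamma_k/\beta_k$ and $f_0(t_k)\alpha_k=\beta_k$ determine $t_k$ and then $\alpha_k$. I expect the main obstacle to be precisely the middle step — recognising that, although $f_1/f_0$ is not positive, its derivative is the constant $1$, which lies in $D_{f_0}U_n$ exactly because of the confluence $\lambda_1=\lambda_0$, so that the normalization result can be invoked for the pair $(V_{n-1},1)$ rather than $(U_n,f_1/f_0)$; the accompanying identifications of $D_{f_0}U_n$ and $V_{n-1}'$ as shifted spaces of exponential polynomials are then routine.
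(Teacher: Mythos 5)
Your proof is correct and follows the route the paper intends: the paper's own proof of this theorem consists solely of the line \emph{``Set $f_0(x)=e^{\lambda_0 x}$ and $f_1(x)=xe^{\lambda_0 x}$. Then argue as in the preceding proof,''} and what you have done is carry out that instruction, correctly noting that in the confluent case $f_1/f_0=x$ need not be positive and that $g_0'\ne(\lambda_1-\lambda_0)g_0$, so Theorem \ref{Thm7} must be applied to the pair $(V_{n-1},\,1)$ (with $D_1 V_{n-1}=V_{n-1}'=e^{-\lambda_0 x}E_{(\lambda_2,\dots,\lambda_n)}$) rather than to $(V_{n-1},\,f_1/f_0)$. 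This is exactly the adjustment the authors leave implicit, and your identifications of $D_{f_0}U_n$ and $V_{n-1}'$ as shifted exponential spaces are the same ones used in Theorem \ref{ThmMain2}.
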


\begin{proof}
Set $f_{0}\left( x\right)
=e^{\lambda _{0}x}$ and $f_{1}\left( x\right) =xe^{\lambda _{0}x}.$
Then argue as in the preceding proof.
\end{proof}

It is a natural question whether one can extend Theorem \ref{ThmMain2}
to the case of two complex exponentials $e^{\lambda _{0}x}$ and $e^{\lambda
_{1}x}.$ Analyzing the first part of the proof of Theorem \ref{ThmBern}
(cf. (\ref{eqtk})) one
obtains the following necessary condition for the node $t_{k}$
\begin{equation}
e^{\left( \lambda _{1}-\lambda _{0}\right) t_{k}}=\frac{\gamma_{k}}{\beta
_{k}}  \label{eqneudeftk}
\end{equation}
where $\beta _{k}$ and $\gamma _{k}$ are defined by the equations (\ref
{eqeq}) for $k=0,...,n.$ Since $\beta _{k},\gamma _{k}$ and $\lambda
_{0},\lambda _{1}$ are complex numbers there seems to be a priori little hope to
find the possible solution $t_{k}$  in the interval $\left[ a,b%
\right] .$ However, in \cite{MoNe00} S. Morigi and M. Neamtu
are able to introduce
a Bernstein operator, based on exponential polynomials with \emph{equidistant}
exponents, which fixes two complex exponentials $e^{\lambda _{0}x}$ and $%
e^{\lambda _{1}x}$ with $\lambda _{0}=\overline{\lambda _{1}}.$ We
generalize their result to the case of not necessarily equidistant exponents $\lambda _{0},...,\lambda _{n}$. Observe that in order for
$e^{ix}, e^{-ix}$ to be an extended Chebyshev system
 over $\left[ a,b\right] $, it is necessary to have $b - a < \pi$,
as can be seen by counting the zeros of $\sin x$ (or $\cos x$)
in $[a,b]$. In general the corresponding condition on the length of
$[a,b]$ is also sufficient, cf. Theorem 9 of \cite{AKR07}.

\begin{theorem}
Let $\lambda _{0},...,\lambda _{n}$ be complex numbers such that $\lambda
_{0}$ is not real and $\lambda _{1}=\overline{\lambda _{0}}.$ Assume that
the spaces $E_{\left( \lambda _{0},...,\lambda _{n}\right) }$, $E_{\left(
\lambda _{2},...,\lambda _{n}\right) }$ and $E_{\left( \lambda _{0},\lambda
_{1}\right) }$ are extended Chebyshev systems over $\left[ a,b\right] $,
closed under complex conjugation. Then there exist unique points $%
t_{0},...,t_{n}\in \left[ a,b\right] $, and unique positive coefficients $%
\alpha _{0},...,\alpha _{n}$, such that the operator $B_{n}:C\left[ a,b%
\right] \rightarrow E_{\left( \lambda _{0},\lambda _{1},\lambda
_{2}...,\lambda _{n}\right) }$ defined by (\ref{defB}) satisfies
$B_{n}e^{\lambda _{0}x}=e^{\lambda _{0}x}$ and $B_{n}\left( e^{\lambda
_{1}x}\right) =e^{\lambda _{1}x}.$
\end{theorem}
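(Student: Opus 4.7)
Write $\lambda_0 = \mu + i\nu$ and assume without loss of generality $\nu > 0$. Since $E_{(\lambda_0,\lambda_1)}$ is an extended Chebyshev system on $[a,b]$ closed under conjugation, its nonzero real elements have the form $R\,e^{\mu x}\cos(\nu x - \psi)$ and must have at most one zero in $[a,b]$; consecutive zeros being $\pi/\nu$ apart, this forces $\nu(b-a) < \pi$. Set $\phi := \nu(a+b)/2$, so that $\nu x - \phi \in (-\pi/2,\pi/2)$ for every $x \in [a,b]$, and define the real functions
\[
f_0(x) := e^{\mu x}\cos(\nu x - \phi), \qquad f_1(x) := e^{\mu x}\sin(\nu x - \phi),
\]
both lying in $U_n := E_{(\lambda_0,\ldots,\lambda_n)}$ as real combinations of $e^{\lambda_0 x}$ and $e^{\lambda_1 x}$. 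By construction $f_0 > 0$ on $[a,b]$, and $f_1/f_0 = \tan(\nu x - \phi)$ is strictly increasing there. Moreover $e^{\lambda_0 x} = e^{i\phi}(f_0 + if_1)$ and $e^{\lambda_1 x} = e^{-i\phi}(f_0 - if_1)$, so by $\mathbb{C}$-linearity, $B_n$ fixes $e^{\lambda_0 x}$ and $e^{\lambda_1 x}$ if and only if it fixes $f_0$ and $f_1$.

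The plan is to apply Theorem \ref{ThmMain} to $U_n$, $f_0$, $f_1$. The space $U_n$ has a non-negative Bernstein basis since it is an extended Chebyshev system on $[a,b]$ closed under conjugation. The crux is identifying $V := D_{f_0}U_n$. A direct computation from
\[
\bigl(e^{\lambda_j x}/f_0\bigr)' = \frac{e^{(\lambda_j-\mu)x}\bigl[(\lambda_j-\mu)\cos(\nu x-\phi) + \nu\sin(\nu x-\phi)\bigr]}{\cos^2(\nu x-\phi)}
\]
together with the half-angle expressions of $\cos$ and $\sin$ in terms of $e^{\pm i(\nu x - \phi)}$ shows that $V = \sec^2(\nu x - \phi)\cdot W$, where $W$ is the $n$-dimensional space spanned by the constant $1$ (contributed by $j = 0,1$) and, for $j = 2,\ldots,n$, by
\[
N_j(x) := (\lambda_j-\lambda_0)e^{-i\phi}e^{(\lambda_j-\lambda_1)x} + (\lambda_j-\lambda_1)e^{i\phi}e^{(\lambda_j-\lambda_0)x}.
\]
The essential identity $N_j'(x) = 2(\lambda_j-\lambda_0)(\lambda_j-\lambda_1)\,e^{(\lambda_j-\mu)x}\cos(\nu x-\phi)$ then gives
\[
W' = \cos(\nu x-\phi)\,e^{-\mu x}\,E_{(\lambda_2,\ldots,\lambda_n)}.
\]

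Since $\cos(\nu x - \phi)\,e^{-\mu x} > 0$ on $[a,b]$ and $E_{(\lambda_2,\ldots,\lambda_n)}$ is an extended Chebyshev system on $[a,b]$ closed under conjugation by hypothesis, $W'$ is an $(n-1)$-dimensional extended Chebyshev system on $[a,b]$, closed under conjugation. A Rolle argument on the real subspace $W^{\mathbb{R}}$ now upgrades this to $W$: a nonzero $w \in W^{\mathbb{R}}$ with $n$ or more zeros in $[a,b]$ would force $w' \in (W')^{\mathbb{R}}$ to have at least $n-1$ zeros, contradicting $(W')^{\mathbb{R}}$ being extended Chebyshev of dimension $n-1$, unless $w' \equiv 0$; but then $w$ would be a nonzero constant, which has no zeros. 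So $W$ (and therefore $V$, by multiplication by the positive function $\sec^2(\nu x - \phi)$) is an $n$-dimensional extended Chebyshev system on $[a,b]$ closed under conjugation, and $V$ possesses a non-negative Bernstein basis $q_{n-1,k}$.

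Finally, set $g_0 := (f_1/f_0)' = \nu\sec^2(\nu x - \phi)$, a strictly positive element of $V$ which corresponds to the constant $\nu \in W$ under the factorization $V = \sec^2(\nu x - \phi)\cdot W$. Consequently $D_{g_0}V = \nu^{-1} W'$, already shown to be extended Chebyshev on $[a,b]$ closed under conjugation, so it too has a non-negative Bernstein basis. Theorem \ref{Thm7} applied to $V$ with positive element $g_0$ then shows that the coefficients $w_k$ in $g_0 = \sum_{k=0}^{n-1}w_k q_{n-1,k}$ are positive, verifying the remaining hypothesis of Theorem \ref{ThmMain}. The latter now provides unique $t_0,\ldots,t_n \in [a,b]$ and positive $\alpha_0,\ldots,\alpha_n$ such that $B_n$ fixes $f_0, f_1$, hence $e^{\lambda_0 x}, e^{\lambda_1 x}$; uniqueness is the one provided by Theorem \ref{ThmBern}. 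The main obstacle throughout is the explicit identification of $D_{f_0}U_n$ and $D_{g_0}D_{f_0}U_n$ as weighted versions of the lower-dimensional exponential polynomial space $E_{(\lambda_2,\ldots,\lambda_n)}$; once these computations are in place, everything reduces to Theorem \ref{ThmMain} combined with Theorem \ref{Thm7}.
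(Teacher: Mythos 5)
Your overall strategy coincides with the paper's: trade $e^{\lambda_0 x},e^{\lambda_1 x}$ for a real pair $f_0>0$, $f_1$ with $f_1/f_0$ increasing, show that $D_{f_0}U_n$ and the relevant second derivative space are positive-weight copies of spaces built from $E_{(\lambda_2,\dots,\lambda_n)}$, and then invoke Theorem \ref{Thm7} and Theorem \ref{ThmMain} (with uniqueness from Theorem \ref{ThmBern}). Your ``de-normalized'' set-up --- extracting $\nu(b-a)<\pi$ from the Chebyshev hypothesis on $E_{(\lambda_0,\lambda_1)}$ and centering the phase $\phi$ at the midpoint of $[a,b]$ --- is a legitimate hands-on substitute for the paper's reduction to $\lambda_0=i$, $\lambda_1=-i$, $[a,b]\subset(-\pi/2,\pi/2)$ via the translation/scaling lemmas of \cite{AKR07}, and your computations ($f_0$, $f_1$, the formula for $(e^{\lambda_j x}/f_0)'$, the identity $N_j'=2(\lambda_j-\lambda_0)(\lambda_j-\lambda_1)e^{(\lambda_j-\mu)x}\cos(\nu x-\phi)$, and the Rolle argument carried out on the real subspace) are correct as far as they go. The genuine gap is that your identification of $V=D_{f_0}U_n$ and of $W'$ tacitly assumes the exponents are pairwise distinct: you use that $U_n$ is spanned by the pure exponentials $e^{\lambda_j x}$, that the $N_j$ together with $1$ span an $n$-dimensional space, and that $(\lambda_j-\lambda_0)(\lambda_j-\lambda_1)\neq 0$ for $j\geq 2$. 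The theorem makes no such assumption: repeated exponents are allowed (for instance $\lambda_0=i$, $\lambda_1=-i$, $\lambda_2=\lambda_3=0$ with $b-a<\pi$ satisfies every hypothesis and gives $U_3=\langle 1,x,\cos x,\sin x\rangle$), and then $U_n$ contains terms $x^k e^{\lambda x}$ outside your spanning set, the $N_j$ may coincide or degenerate, and the key equality $W'=\cos(\nu x-\phi)\,e^{-\mu x}E_{(\lambda_2,\dots,\lambda_n)}$ is not established.

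The repair is exactly the point where the paper's route differs from yours: instead of an explicit basis, the paper uses the operator identity $\frac{d}{dx}\bigl(f_0^2\frac{d}{dx}(h/f_0)\bigr)=f_0\bigl(\frac{d^2}{dx^2}+1\bigr)h$ (after normalization), whose un-normalized form is
\begin{equation*}
\frac{d}{dx}\Bigl(e^{-2\mu x}f_0^{2}\,\frac{d}{dx}\frac{h}{f_0}\Bigr)=e^{-2\mu x}f_0\Bigl(\frac{d}{dx}-\lambda_0\Bigr)\Bigl(\frac{d}{dx}-\lambda_1\Bigr)h ,
\end{equation*}
valid for all $h\in C^2$ because $f_0\in E_{(\lambda_0,\lambda_1)}$. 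Your $N_j'$ identity is precisely this identity evaluated on the exponential basis; using the operator form directly (together with the zero-count/Rolle argument you already give, and the fact that $(\frac{d}{dx}-\lambda_0)(\frac{d}{dx}-\lambda_1)$ maps $E_{(\lambda_0,\dots,\lambda_n)}$ into $E_{(\lambda_2,\dots,\lambda_n)}$ regardless of multiplicities) identifies $D_{f_0}U_n$ and $D_{g_0}V$ uniformly in the multiplicities, after which your application of Theorem \ref{Thm7} with $g_0=(f_1/f_0)'=\nu\sec^2(\nu x-\phi)$ and then of Theorem \ref{ThmMain} goes through unchanged.
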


\begin{proof}
Using \cite{AKR07}, specifically Lemma 7 and Proposition 10 (which basically say that applying a translation to
$\lambda _{0},...,\lambda _{n}$ preserves the extended Chebyshev system
property, and the new Bernstein basis is obtained from the
old one via multiplication by a suitable exponential function),
one may assume that $\lambda _{0}$ is purely imaginary.
Using the assumption that $E_{\left( \lambda _{0},\lambda
_{1}\right) }$ is an extended Chebyshev space
we may assume that $\lambda _{0}=i$, $\lambda _{1}=-i,$ and   $\left[ a,b\right] \subset \left( -\frac{1}{2}\pi ,\frac{1}{2}\pi
\right) $, by a scaling
argument (cf. \cite{AKR07}, Lemma 8, which states that multiplying each
$\lambda _{0},...,\lambda _{n}$ by a fixed $c > 0$ preserves,
 over a suitably re-scaled interval, the extended Chebyshev system
property) and a translation of $[a,b]$ using a real number (cf. \cite{AKR07}, Lemma 6, by which such translation of $[a,b]$  preserves
 the extended Chebyshev system
property).

Clearly $B_{n}$ fixes the
functions $e^{ix}$ and $e^{-ix}$ if (and only if)
$B_{n}$ fixes $f_{0}=\cos x$ and $%
f_{1}=\sin x.$ Furthermore, $U_{n}:=\left\langle e^{ix}, e^{-ix},
e^{\lambda_2 x}, \dots,  e^{\lambda_n x}\right\rangle
= \left\langle f_{0}, f_{1},
e^{\lambda_2 x}, \dots,  e^{\lambda_n x}\right\rangle$.
Now we want to apply Theorem \ref{ThmMain}. By assumption $%
U_{n} =E_{\left( \lambda _{0},...,\lambda _{n}\right) }$ is an extended
Chebyshev space over  $[a,b]$, so it has a non-negative Bernstein basis. Next we
consider the space $D_{f_{0}}U_{n}.$ For every $h\in U_{n}$ we have
\begin{equation*}
\frac{d}{dx}\frac{h}{f_{0}}=\frac{h^{\prime }\cdot f_{0}-h\cdot
f_{0}^{\prime }}{f_{0}^{2}}.
\end{equation*}
Using   $f_{0}^{\prime \prime }=-f_{0}$ we obtain
\begin{equation}
\frac{d}{dx}\left( f_{0}^{2}\frac{d}{dx}\frac{h}{f_{0}}\right) =h^{\prime
\prime }\cdot f_{0}-h\cdot f_{0}^{\prime \prime }=f_{0}\left( \frac{d^{2}}{%
dx^{2}}+1\right) h.  \label{eqff}
\end{equation}
Suppose now that $g\in D_{f_{0}}U_{n}$ had at least $n$ zeros in $\left[ a,b
\right] .$ The function $g$ is of the form
\begin{equation*}
g=\frac{d}{dx}\frac{f}{f_{0}}
\end{equation*}
for some $f\in U_{n}$, and $f_{0}^{2}g$ has at least $n$ zeros in $\left[
a,b\right] ,$ so $\left( f_{0}^{2}g\right) ^{\prime }$ has at least $n-1$
zeros in $\left[ a,b\right] $ by Rolle's theorem.
Note that $ f \in U_n $ implies that
$$
  \left( {  \frac{d^2}{ d x^2 }} + 1 \right) f
   \in E_{\left( \lambda _{2},...,\lambda _{n}\right) }.$$
Thus
 by (\ref{eqff}), with $h = f$, we have
\begin{equation*}
\left( f_{0}^{2}g\right) ^{\prime }\in f_{0}\cdot E_{\left( \lambda
_{2},...,\lambda _{n}\right) }.
\end{equation*}
Since $E_{\left( \lambda _{2},...,\lambda _{n}\right) }$ is an extended
Chebyshev space over $\left[ a,b\right] $ we conclude that $\left(
f_{0}^{2}g\right) ^{\prime }=0,$ so $f_{0}^{2}g$ is constant. But a constant with
at least $n$ zeros is identically zero, so   $g\equiv 0$, and we
have shown that $D_{f_{0}}U_{n}$ is an extended Chebyshev space over $\left[
a,b\right]$. Hence it has a non-negative Bernstein basis $%
q_{n-1,k},k=0,...,n-1.$
Let us prove now the non-negativity of the coefficients $w_k$ in (\ref{condw}). Consider
\begin{equation*}
\frac{d}{dx}\frac{f_{1}}{f_{0}}=\frac{f_{1}^{\prime }\cdot f_{0}-f_{1}\cdot
f_{0}^{\prime }}{f_{0}^{2}}=\frac{1}{f_{0}^{2}}%
=\sum_{k=0}^{n-1}w_{k}q_{n-1,k}.
\end{equation*}
We show that the coefficients in the representation
\begin{equation*}
1=\sum_{k=0}^{n-1}w_{k}f_{0}^{2}\cdot q_{n-1,k}
\end{equation*}
are positive. Since $V_{n-1}:=f_{0}^{2}\cdot D_{f_{0}}U_{n}$ obviously  has
the non-negative Bernstein basis $f_{0}^{2}\cdot q_{n-1,k},k=0,...,n-1$, and $%
g_{0}:=1$ is strictly positive, Theorem \ref{Thm7} tells us that the
coefficients $w_{k}$ are positive provided that $D_{g_{0}}V_{n-1}$ is an
extended Chebyshev space over $\left[ a,b\right] .$ Using (\ref{eqff}) we
obtain
\begin{eqnarray*}
D_{g_{0}}V_{n-1} &=&\left\{ \frac{d}{dx}h:h\in V_{n-1}\right\} =\left\{
f_{0}\left( \frac{d^{2}}{dx^{2}}+1\right) f:f\in E_{\left( i,-i,\lambda
_{2},...,\lambda _{n}\right) }\right\} \\
&=&f_{0}\cdot E_{\left( \lambda _{2},...,\lambda _{n}\right) }.
\end{eqnarray*}
Since $E_{\left( \lambda _{2},...,\lambda _{n}\right) }$ is an extended Chebyshev sytem over $[a,b]$, so is $D_{g_{0}}V_{n-1}$, and the proof is
complete.
\end{proof}

\section{A counterexample}

The following counterexample shows that in order for Theorem \ref{ThmMain}
to hold, it is not enough to assume that $f_0 > 0$, that $f_1 / f_0$ is
increasing, and that both $U_n$ and $D_{f_0} U_n$ have non-negative
Bernstein bases. Positivity of the coefficients $w_k$ in (\ref{condw}) is
essential. Letting $f_0\equiv 1$, we shall exhibit a strictly increasing
function $f_1$ on $[0, b] \subset [0, 2\pi)$ such that for all $b\ge 7\pi/4$%
, the node $t_{n-2}$ used in the definition of the Bernstein operator fixing
$f_0$ and $f_1$, must fall outside $[0,b]$.

In the literature special attention has been devoted to the linear space
generated by the functions
$
1,x,...,x^{n-1},\cos x,\sin x
$
over the interval $\left[ 0,b\right] ,b>0.$ It is well known that the linear
space $\left\langle \cos x,\sin x\right\rangle $ is an extended Chebyshev
space over $[0,b]$ whenever $b<\pi ,$ while $\left\langle 1,\cos x,\sin
x\right\rangle $ and $\left\langle 1,x,\cos x,\sin x\right\rangle $ are
extended Chebyshev spaces over $[0,b]$ if $b<2\pi .$ In \cite{CMP07} it is
shown that
\begin{equation*}
U_{4}:=\left\langle 1,x,x^{2},\cos x,\sin x\right\rangle
\end{equation*}
is an extended Chebyshev system over $[0,b]$ provided that $b<\rho \approx
8.9868189$, where $\rho $ is the first positive zero of the equation $\tan
\left( x/2\right) =x/2.$

We are going to present an explicit Bernstein basis $p_{4,k}$, $k=0,...,4$
for this specific space $U_4$ and $0 < b < 2\pi$.
Such a basis can be found, for instance, by using the algorithm from Section
3 of \cite{AKR07}.
The first basis function
\begin{equation*}
p_{4,4}\left( x\right) =\cos x-1+\frac{1}{2}x^{2} \approx \frac{x^4}{24}
\end{equation*}
is strictly positive for all real $x\neq 0$ and has a zero of order 4 at $0$%
. A basis function $p_{4,3}$ with three zeros at $0$ and (at least) one zero
at $b$ is given by
\begin{equation*}
p_{4,3}\left( x\right) =\left( \sin b-b\right) \left( \cos x-1+\frac{1}{2}%
x^{2}\right) +\left( \cos b-1+\frac{1}{2}b^{2}\right) \left( x-\sin x\right)
.
\end{equation*}
By either recalling that $U_4$ is an extended Chebyshev system over $[0,b]$,
or directly by checking that if $b<\rho $, then $p_{4,3}^{\prime }\left(
b\right) \neq 0$, we conclude that $p_{4,3}$ has exactly one zero at $b$.
Furthermore, since $p_{4,3}$ does not change sign in $(0,b)$, to check
positivity it is enough to evaluate $p_{4,3}$ at some suitably selected
point. For instance, given $b \ge 7\pi/4$, it is clear that $p_{4,3}(\pi) >
0 $.

The basis function $p_{4,2}$ is given by
\begin{eqnarray*}
p_{4,2}\left( x\right) &=&1+\frac{2\cos b-2+b\sin b}{b+b\cos b-2\sin b}x-%
\frac{1}{2b}\left( \sin b+\frac{2\cos b-2+b\sin b}{b+b\cos b-2\sin b}\left(
1-\cos b\right) \right) x^{2} \\
&&-\cos x-\frac{2\cos b-2+b\sin b}{b+b\cos b-2\sin b}\sin x.
\end{eqnarray*}
It is easy to see that $p_{4,2}$ has at least two zeros at $0$, and by
computing $p_{4,2} (b)$ and $p_{4,2}^{\prime } (b)$, that it has at least
two zeros at $b$. Hence, it has exactly two zeros at each endpoint of $[0,b]$%
, and none inside, so positivity follows by evaluating $p_{4,2}$ at some
suitably chosen point in $(0,b)$ (alternatively, we mention that if $p_{4,2}$
were not positive we could replace it with $- p_{4,2}$ and nothing in the
argument below would change). Finally, the basis functions $p_{4,0}$ and $%
p_{4,1}$ are defined by $p_{4,0}\left( x\right) :=p_{4,4}\left( b-x\right)$
and $p_{4,1}\left( x\right) :=p_{4,3}\left( b-x\right) $.

Let $f_{0}=1,$ so
\begin{equation*}
D_{f_{0}}U_{4}=\left\langle 1,x,\cos x,\sin x\right\rangle .
\end{equation*}
Thus, $U_{4}$ and $D_{f_{0}}U_{4}$ possess non-negative Bernstein bases for
every $b\in \left( 0,2\pi \right) .$ Next, consider the strictly increasing
function $f_{1}\in U_{4}$ given by
\begin{equation*}
f_{1}\left( x\right) :=1+x-\cos x.
\end{equation*}
We want to show that for certain values $b\in \left( 0,2\pi \right) $ there
does not exists a Bernstein operator (as described in Theorem \ref{ThmMain})
fixing $f_0$ and $f_1$. In order to facilitate the necessary computations we
will start with some general remarks: Assume that $p_{n,k},k=0,...,n$, is a
Bernstein basis of $U_{n}$ for $\{a,b\}$; let us compute some of the
coefficients $\beta _{0},...,\beta _{n}$ in the expression
\begin{equation*}
f=\sum_{k=0}^{n}\beta _{k}p_{n,k}.
\end{equation*}
Inserting $x=b$ yields $p_{n,n}\left( b\right) \beta _{n}=f\left( b\right) .$
Taking the derivative of $f$ at $b$ we get $f^{\prime }\left( b\right)
=\beta _{n-1}p_{n,n-1}^{\prime }\left( b\right) +\beta _{n}p_{n,n}^{\prime
}\left( b\right)$, and after multiplying by $p_{n,n}\left( b\right)$, we
obtain
\begin{equation*}
p_{n,n}\left( b\right) p_{n,n-1}^{\prime }\left( b\right) \beta
_{n-1}=f^{\prime }\left( b\right) p_{n,n}\left( b\right) -f\left( b\right)
p_{n,n}^{\prime }\left( b\right).
\end{equation*}
Consider the expression $f^{\prime \prime }\left( b\right) =\beta
_{n-2}p_{n,n-2}^{\prime \prime }\left( b\right) +\beta
_{n-1}p_{n,n-1}^{\prime \prime }\left( b\right) +\beta _{n}p_{n,n}^{\prime
\prime }\left( b\right)$. Multiplication by $p_{n,n}\left( b\right)
p_{n,n-1}^{\prime }\left( b\right)$ and a short computation shows that
\begin{eqnarray}
\beta _{n-2}p_{n,n}\left( b\right) p_{n,n-1}^{\prime }\left( b\right)
p_{n,n-2}^{\prime \prime }\left( b\right) &=&p_{n,n}\left( b\right) \left[
f^{\prime \prime }\left( b\right) p_{n,n-1}^{\prime }\left( b\right)
-f^{\prime }\left( b\right) p_{n,n-1}^{\prime \prime }\left( b\right) \right]
+  \label{eqlong} \\
&&f\left( b\right) \left[ p_{n,n}^{\prime }\left( b\right) p_{n,n-1}^{\prime
\prime }\left( b\right) -p_{n,n-1}^{\prime }\left( b\right) p_{n,n}^{\prime
\prime }\left( b\right) \right] .  \label{eqlong2}
\end{eqnarray}

\begin{proposition}
Let $n\geq 2.$ Assume that $p_{n,k},k=0,...,n$ is a locally non-negative
Bernstein basis of $U_{n}$ at $\{a,b\}$, and let $f_{0},f_{1}\in U_{n}$ be
given by $f_{0}=\sum_{k=0}^{n}\beta _{k}p_{n,k}$ and $f_{1}=\sum_{k=0}^{n}%
\gamma _{k}p_{n,.k}.$ If $f_{0}\left( b\right) >0$ and $\beta _{k}>0$ for $%
k=n,n-1,n-2$, then the requirement
\begin{equation*}
\frac{\gamma _{n-2}}{\beta _{n-2}}\leq \frac{\gamma _{n}}{\beta _{n}}
\end{equation*}
is equivalent to the inequality
\begin{equation}
\left[ f_{0}\left( b\right) f_{1}^{\prime \prime }\left( b\right)
-f_{0}^{\prime \prime }\left( b\right) f_{1}\left( b\right) \right]
p_{n,n-1}^{\prime }\left( b\right) -\left[ f_{0}\left( b\right)
f_{1}^{\prime }\left( b\right) -f_{0}^{\prime }\left( b\right) f_{1}\left(
b\right) \right] p_{n,n-1}^{\prime \prime }\left( b\right) \geq 0.
\label{eqcrit}
\end{equation}
\end{proposition}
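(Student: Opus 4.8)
The plan is to characterize the inequality $\gamma_{n-2}/\beta_{n-2}\le\gamma_n/\beta_n$ by substituting into it the explicit closed forms for the three top Bernstein coefficients of an expansion at the right endpoint $b$: namely $p_{n,n}(b)\beta_n=f_0(b)$, the analogous equality for $\gamma_n$, and the identity (\ref{eqlong})--(\ref{eqlong2}) derived just above the proposition, applied once with $f=f_0$ (producing $\beta_{n-2}$) and once with $f=f_1$ (producing $\gamma_{n-2}$). Comparing the resulting expression with (\ref{eqcrit}) should give the equivalence after a short algebraic manipulation.

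First I would record sign information at $b$. Since $p_{n,n}$ has a zero of order $0$ at $b$ and the basis is locally non-negative, $p_{n,n}(b)>0$. By Lemma \ref{LemA}, $p_{n,n-1}^{\prime}(b)<0$. Since $p_{n,n-2}$ has a zero of order exactly $2$ at $b$ while being non-negative just to the left of $b$, a Taylor expansion at $b$ forces $p_{n,n-2}^{\prime\prime}(b)\ge 0$, hence $p_{n,n-2}^{\prime\prime}(b)>0$. Consequently $Q:=p_{n,n}(b)\,p_{n,n-1}^{\prime}(b)\,p_{n,n-2}^{\prime\prime}(b)<0$. Moreover $\beta_n=f_0(b)/p_{n,n}(b)>0$ and $\beta_{n-2}>0$ by hypothesis, so $\gamma_{n-2}/\beta_{n-2}\le\gamma_n/\beta_n$ is equivalent to $\gamma_{n-2}\beta_n-\gamma_n\beta_{n-2}\le 0$, and, since $Q<0$, also equivalent to $Q\,(\gamma_{n-2}\beta_n-\gamma_n\beta_{n-2})\ge 0$.

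Next I would substitute and simplify. Writing $X_i:=f_i^{\prime\prime}(b)p_{n,n-1}^{\prime}(b)-f_i^{\prime}(b)p_{n,n-1}^{\prime\prime}(b)$ for $i=0,1$ and $Y:=p_{n,n}^{\prime}(b)p_{n,n-1}^{\prime\prime}(b)-p_{n,n-1}^{\prime}(b)p_{n,n}^{\prime\prime}(b)$, the formula (\ref{eqlong})--(\ref{eqlong2}) reads $\beta_{n-2}Q=p_{n,n}(b)X_0+f_0(b)Y$ and $\gamma_{n-2}Q=p_{n,n}(b)X_1+f_1(b)Y$. Multiplying $\gamma_{n-2}Q$ by $\beta_n p_{n,n}(b)=f_0(b)$, multiplying $\beta_{n-2}Q$ by $\gamma_n p_{n,n}(b)=f_1(b)$, and subtracting, the terms carrying $Y$ have coefficient $f_0(b)f_1(b)-f_1(b)f_0(b)=0$ and cancel, leaving $Q\,p_{n,n}(b)\,(\gamma_{n-2}\beta_n-\gamma_n\beta_{n-2})=p_{n,n}(b)\bigl(f_0(b)X_1-f_1(b)X_0\bigr)$. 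Dividing by $p_{n,n}(b)>0$ and expanding $f_0(b)X_1-f_1(b)X_0$ gives $Q\,(\gamma_{n-2}\beta_n-\gamma_n\beta_{n-2})=\bigl[f_0(b)f_1^{\prime\prime}(b)-f_0^{\prime\prime}(b)f_1(b)\bigr]p_{n,n-1}^{\prime}(b)-\bigl[f_0(b)f_1^{\prime}(b)-f_0^{\prime}(b)f_1(b)\bigr]p_{n,n-1}^{\prime\prime}(b)$, whose right-hand side is exactly the left-hand side of (\ref{eqcrit}). Together with the reduction of the previous paragraph, this shows that $\gamma_{n-2}/\beta_{n-2}\le\gamma_n/\beta_n$ holds if and only if (\ref{eqcrit}) holds.

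The only point requiring care is the cancellation step: one must keep straight which of the two surviving products is multiplied by $f_0(b)$ and which by $f_1(b)$, so that the cross terms assemble into the Wronskian-type combinations $f_0(b)f_1^{\prime\prime}(b)-f_0^{\prime\prime}(b)f_1(b)$ and $f_0(b)f_1^{\prime}(b)-f_0^{\prime}(b)f_1(b)$ appearing in (\ref{eqcrit}). Since this is the same cross-multiplication already carried out in deriving (\ref{eqlong})--(\ref{eqlong2}), no essential obstacle is anticipated; the remainder is elementary sign bookkeeping using the positivity of $p_{n,n}(b)$, $p_{n,n-2}^{\prime\prime}(b)$ and the negativity of $p_{n,n-1}^{\prime}(b)$.
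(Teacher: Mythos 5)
Your argument is correct and follows essentially the same route as the paper's: apply identity (\ref{eqlong})--(\ref{eqlong2}) once with $f=f_0$ and once with $f=f_1$, observe that the product $Q:=p_{n,n}(b)\,p_{n,n-1}'(b)\,p_{n,n-2}''(b)$ is negative (the paper cites Lemma~\ref{LemA}; you give a direct Taylor argument for $p_{n,n-2}''(b)>0$, which is fine), and use the cancellation of the Wronskian term $Y=p_{n,n}'(b)p_{n,n-1}''(b)-p_{n,n-1}'(b)p_{n,n}''(b)$ after cross-multiplying by $f_0(b)$ and $f_1(b)$. The only cosmetic difference is that you track $\gamma_{n-2}\beta_n-\gamma_n\beta_{n-2}$ while the paper works with $f_0(b)G_{n-2}-f_1(b)B_{n-2}$; these differ only by the positive factor $p_{n,n}(b)$ and the negative factor $Q$, so the sign bookkeeping agrees.
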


\begin{proof}
Since $p_{n,k},k=0,...,n$ is a locally non-negative Bernstein basis at $%
\{a,b\}$, it follows from Lemma \ref{LemA} that $p_{n,n-1}^{\prime }\left(
b\right) <0,$ and $p_{n,n-2}^{\prime \prime }\left( b\right) >0.$ Replacing $%
f$ in (\ref{eqlong}) by $f_0$, we get $B_{n-2}:= \beta _{n-2}p_{n,n}\left(
b\right) p_{n,n-1}^{\prime }\left( b\right) p_{n,n-2}^{\prime \prime }\left(
b\right) < 0$, so we conclude that the condition
\begin{equation*}
\frac{G_{n-2}}{B_{n-2}}=\frac{\gamma _{n-2}}{\beta _{n-2}}\leq \frac{\gamma
_{n}}{\beta _{n}}=\frac{f_{1}\left( b\right) }{f_{0}\left( b\right) },
\end{equation*}
where $G_{n-2}:= \gamma _{n-2}p_{n,n}\left( b\right) p_{n,n-1}^{\prime
}\left( b\right) p_{n,n-2}^{\prime \prime }\left( b\right)$, is equivalent
to
\begin{equation*}
f_{0}\left( b\right) G_{n-2}-f_{1}\left( b\right) B_{n-2} \ge 0 .
\end{equation*}
But this is just (\ref{eqcrit}), as can be seen by setting $f = f_0$ and $f=
f_1$ in (\ref{eqlong}).
\end{proof}

Let us return to our example, where $f_{0}=1$, $f_{1}=1+x-\cos x$, and $n=4.$
Fix $b\in \lbrack 7\pi /4,2\pi )$, and note that $\cos b>0$, $\sin b<0$, and
$\cos b+\sin b\geq 0$. Now condition (\ref{eqcrit}) becomes
\begin{equation*}
h(b):=f_{1}^{\prime \prime }\left( b\right) p_{4,3}^{\prime }\left( b\right)
-f_{1}^{\prime }\left( b\right) p_{4,3}^{\prime \prime }\left( b\right)
\end{equation*}
\begin{eqnarray*}
&=&\left( \cos b\right) \left( -\left( \sin b-b\right) ^{2}+\left( \cos b-1+%
\frac{1}{2}b^{2}\right) \left( 1-\cos b\right) \right) \\
&&-\left( 1+\sin b\right) \left( \left( \sin b-b\right) \left( 1-\cos
b\right) +\left( \cos b-1+\frac{1}{2}b^{2}\right) \left( \sin b\right)
\right)
\end{eqnarray*}
\begin{equation*}
=\left( \cos b\sin b\right) b-\frac{1}{2}b^{2}\cos b+2\cos ^{2}b-2\cos
b+b-b\cos b-\frac{1}{2}\left( \sin b\right) b^{2}+\left( \sin b\right) b-%
\frac{1}{2}b^{2}
\end{equation*}
\begin{equation*}
<b-\frac{b^{2}}{2}<0.
\end{equation*}
Thus $\gamma _{2}/\beta _{2}>\gamma _{4}/\beta _{4}=f_{1}\left( b\right) ,$
and recalling (\ref{eqtk}), we see that the corresponding node $%
t_{2}=f_{1}^{-1}\left( \gamma _{2}/\beta _{2}\right) >b$, so it falls
outside $[0,b]$.

\end{document}